\newtheorem{theorem}{Theorem}
\newtheorem{lemma}{Lemma}
\theoremstyle{definition}
\newtheorem{remark}{Remark}
\newtheorem{proposition}{Proposition}
\renewcommand{\le}{\leqslant}
\renewcommand{\ge}{\geqslant}
\renewcommand{\leq}{\leqslant}
\renewcommand{\geq}{\geqslant}
\renewcommand{\emptyset}{\varnothing}
\newcommand{\cc}{\mathcal{C}} 
\newcommand{\ce}{\mathcal{E}}
\newcommand{\ck}{\mathcal{K}}
\newcommand{\cm}{\mathcal{M}}
\newcommand{\cz}{\mathcal{Z}}
\newcommand{\tmod}{\ \mathsf{mod}\ }
\newcommand{\real}{\mathbb{R}}
\newcommand{\ints}{\mathbb{Z}}
\newcommand{\natu}{\mathbb{N}}
\newcommand{\caln}{\mathcal{N}}
\newcommand{\bsa}{\boldsymbol{a}}
\newcommand{\bsk}{\boldsymbol{k}}
\newcommand{\bsq}{\boldsymbol{q}}
\newcommand{\bsx}{\boldsymbol{x}}
\newcommand{\bsy}{\boldsymbol{y}}
\newcommand{\bszero}{\boldsymbol{0}}
\newcommand{\bsone}{\boldsymbol{1}}
\newcommand{\bsell}{\boldsymbol{\ell}}
\newcommand{\bskappa}{\boldsymbol{\kappa}}
\newcommand{\rd}{\,\mathrm{d}}
\newcommand{\dunif}{\mathbb{U}}
\newcommand{\simiid}{\stackrel{\mathrm{iid}}\sim}
\newcommand{\eqd}{\stackrel{\mathrm{d}}=}
\newcommand{\nlb}{\nolinebreak}
\newcommand{\e}{\mathbb{E}}
\newcommand{\rank}{\mathrm{rank}}
\newcommand{\row}{\mathrm{Row}}
\newcommand{\one}{\mathbf{1}}
\newcommand{\norm}[1]{\Vert #1 \Vert_1}
\newcommand{\giv}{\!\mid\!}
\newcommand{\tran}{\mathsf{T}}
\newcommand{\phe}{\phantom{=}}
\newcommand{\walk}{\mathrm{wal}_k}
\newcommand{\walbsk}{\mathrm{wal}_{\bsk}}
\newcommand{\supp}{\boldsymbol{s}}
\newcommand{\normone}[1]{\Vert \lceil #1 \rceil \Vert_1}
\newcommand{\normtwo}[1]{\lceil#1\rceil_2}
\newcommand{\second}[1]{\lceil#1\rceil_{(2)}}
\title{Skewness of a randomized quasi-Monte Carlo estimate}
\author{Zexin Pan\\Stanford University
\and Art B. Owen\\Stanford University}
\date{Feb 2025}
\begin{document}

\maketitle
\begin{abstract}
Some recent work on confidence intervals for randomized quasi-Monte Carlo (RQMC) sampling found a surprising result: ordinary Student's $t$ 95\% confidence intervals based on a modest number of replicates were seen to be very effective and even more reliable than some bootstrap $t$ intervals that were expected to be best.  One potential explanation is that those RQMC estimates have small skewness. In this paper we give conditions under which the skewness is $O(n^\epsilon)$ for
any $\epsilon>0$, so `almost $O(1)$'.  Under a random generator matrix model, we can improve this rate to $O(n^{-1/2+\epsilon})$ with very high probability.  We also improve some probabilistic bounds on the distribution of the quality parameter $t$ 
for a digital net in a prime base under random sampling of generator matrices.
    
\end{abstract}
\section{Introduction}

We study the skewness of randomized quasi-Monte Carlo
(RQMC) sampling using some scrambled digital nets.
We are interested in this because it helps to
explain a phenomenon found in \cite{ci4rqmc}.  That was an
empirical investigation of confidence intervals
based on replicated RQMC estimates.  A standard Student's $t$ confidence
interval based on the central limit theorem
had quite good performance, and was much more reliable
than the bootstrap $t$ method that had been expected
to dominate. 

On further inspection, it emerged that the RQMC estimates tended to have modest or even very low skewness, although often with quite large kurtosis. 
The standard intervals are sensitive to skewness but
comparatively robust to kurtosis.  One significant advantage
of the bootstrap $t$ method is that it can handle
skewness well, but that advantage was not helpful
in the setting of \cite{ci4rqmc}.  In this paper
we consider the skewness of some scrambled net 
RQMC estimates for
$s$-dimensional integrands with continuous mixed partial derivatives
of order up to 2 in each variable. We believe that these are the first 
theoretical results on the
skewness of RQMC estimates.


Our approach is based on a Walsh function expansion
of the integrand from \cite{dick:pill:2010}.
As described in more detail later, that expansion
leads to an RQMC integration error that is a sum
of contributions that are Walsh coefficients multiplied
by an element of $\{-1,0,1\}$. Each such contribution
has a symmetric distribution because the probability of
multiplying by $+1$ equals the probability of 
multiplying by $-1$.  The integration error is then
a sum of symmetric random variables.  Those random
variables are uncorrelated.  A sum of independent
symmetric random variables is symmetric but such
need not hold when they are merely uncorrelated.
The primary technical difficulty arises from this dependence.

This paper is organized as follows.  Section~\ref{sec:back}
introduces our notation and some background material on the
construction of randomized digital nets, and their analysis
via a Walsh-Fourier decomposition.
Section~\ref{sec:skew} introduces the skewness of our RQMC
estimate, shows how some sums over triples of Walsh-Fourier
terms can be reduced to sums over pairs and bounds the
probability that such pairs contribute to the skewness.
Those are the results we later use to show
that the skewness is $O(n^\epsilon)$.
Section~\ref{sec:rand} considers generator matrices 
constructed completely at random and shows that 
with probability tending to one such matrices
yield skewness $O(n^{-1/2+\epsilon})$. Both skewness results assume that the variance is $\Omega(n^{-3})$ in a setting where it is known to be $O( n^{-3+\epsilon})$.
Of independent interest: we improve a probabilistic bound
on the quality parameter of digital nets in base $2$
compared to the result from \cite{larc:nied:schm:1996}
given as Theorem 5.37 of \cite{dick:pill:2010}.
Section~\ref{sec:example} computes the skewness of some simple test functions 
and those results match our theory.
Section~\ref{sec:disc} has a summary and adds some context
on the larger goal of forming confidence intervals
from RQMC estimates.

\section{Background and notation}\label{sec:back}
We use $\natu$ for the set $\{1,2,3,\dots\}$
of natural numbers and $\natu_0=\natu\cup\{0\}$,
and $\natu^s_* = \natu_0^s\setminus\{\bszero\}$.
For $L\in\natu$ we let
$\ints_L=\{0,1,\dots,L-1\}$. For $s\in\natu$
we write $1{:}s=\{1,2,\dots,s\}$.
For $\bsx\in\real^s$ and $u\subseteq1{:}s$ we use
$\bsx_u$ to denote the vector of elements $x_j$ for $j\in u$
and $\bsx_{-u}$ to denote the vector of elements $x_j$
for $j\not\in u$. We use $\bsone$ for the vector of $s$ ones.
The cardinality of a set $S$ is denoted by $|S|$.

For $a_n,b_n\in(-\infty,\infty)$ we write $a_n=O(b_n)$ if there exists $c<\infty$ and $N\in\natu$
with $|a_n|\le c|b_n|$ for all $n\ge N$.  In that case we also
write $b_n = \Omega(a_n)$. 

The integrands $f$ that we study are defined on $[0,1]^s$. We
will study them as members of $L^2[0,1]^s$. We estimate
$\mu = \int_{[0,1]^s}f(\bsx)\rd \bsx$ by 
$\hat\mu = (1/n)\sum_{i=0}^{n-1}f(\bsx_i)$
for points $\bsx_i\in[0,1]^s$ with $n=2^m$
for some $m\ge0$.  The $\bsx_i$ are obtained from
a digital construction described next.

\subsection{The point construction}
For $m\in\natu$ and $i\in \ints_{2^m}$ with
base $2$ expansion $i = \sum_{\ell=1}^{m}i_\ell 2^{\ell-1}$
we write $\vec{i}=\vec{i}[m] = (i_1,\dots,i_m)^\tran\in\{0,1\}^m$.
For $a\in[0,1)$ we can write $a=\sum_{\ell=1}^\infty a_\ell 2^{-\ell}$. Whenever there are two such expansions, we force
a unique choice by adopting the one with finitely many
$a_\ell=1$. Then for `precision' $E\ge1$ we write
$\vec{a}=\vec{a}[E]=(a_1,\dots,a_E)^\tran\in\{0,1\}^E$.

Our digital construction uses $s$ matrices $C_j\in\{0,1\}^{m\times m}$ for $m\ge1$. We assume at first that these matrices have full rank $m$ over $\ints_2$. Later we will consider random $C_j$ that may be rank deficient.
Unrandomized digital nets are defined via
\begin{align}\label{eq:plainqmc}
\vec{x}_{ij} = C_j\vec{i} \ \tmod 2
\end{align}
for $i\in\ints_{2^m}$ and $j\in1{:}s$.
Here $\vec{i}=\vec{i}[m]$ and
$x_{ij}$ is the point in $a\in[0,1)$
with $\vec{a}=\vec{a}[m] = \vec{x}_{ij}$. 
Finally,
$\bsx_i=(x_{i1},\dots,x_{is})$. We can include $m=0$
by convention, taking $\bsx_0=\bszero$.

The randomization we study comes from \cite{mato:1998:2}
and it has
\begin{align}\label{eqn:xequalMCiplusD}
\vec{x}_{ij} = M_jC_j\vec{i} + \vec{D}_j\ \tmod 2
\end{align}
for a random matrix $M_j\in \{0,1\}^{E\times m}$ and a random vector $\vec{D}_j\in\{0,1\}^E$ for precision $E\ge m$.
The matrices $M_j$ have ones on their diagonals,
zeros above their diagonals and $\dunif\{0,1\}$
random entries below their diagonals.
The vectors $\vec{D}_j$ consist of $\dunif\{0,1\}$
random entries.  All the random variables in all
of the $M_j$ and all of the $\vec{D}_j$ are 
mutually independent.  The matrices $M_j$ induce
a `matrix scramble' while the vectors $\vec{D}_j$
contribute a `digital shift'. The digital shift
makes each $\bsx_{i}$ uniformly distributed
over $\{0,1/2^E,2/2^E,\dots,(2^E-1)/2^E\}^s$ and for $E=\infty$,
each $\bsx_i\sim\dunif[0,1)^s$.

Letting $\bsx_i[E]$ denote the above points
constructed with precision $E$, our estimate is
$$
\hat\mu_E = \frac1n\sum_{i=0}^{n-1}f(\bsx_i)\quad\text{for $\bsx_i=\bsx_i[E]$}.$$
We focus our study on $\hat\mu_\infty$.
In practice, the value of $E$ is constrained
by the floating point representation in use.

\subsection{Elementary intervals and 
the $t$ parameter}

An elementary interval in base $2$ is
a Cartesian product of the form
$$
\mathrm{EI}(\bsk,\bsa) = \prod_{j=1}^s
\Bigl[\frac{a_j}{2^{k_j}},
\frac{a_j+1}{2^{k_j}}
\Bigr)
$$
for $\bsk\in\natu_0^s$ and $\bsa=(a_1,\dots,a_s)$ with each $a_j\in\ints_{2^{k_j}}$.
For $m\ge0$, the points $\bsx_0,\dots,\bsx_{2^m-1}$ defined at \eqref{eq:plainqmc}
are a $(t,m,s)$-net in base $2$ if
every elementary interval of volume $2^{t-m}$
contains precisely $2^t$ of those points.
Randomization does not change the value of
$t$.  See \cite{rtms} for a proof
with a different scrambling.
Smaller values of $t$ provide better
equidistribution and in
some cases one can obtain $t=0$.
We never have $t>m$ because
every $\bsx_i\in[0,1)^s$.

To construct a good digital one must make
a careful choice of matrices $C_1,\dots,C_s\in\{0,1\}^{m\times m}$.
The most popular choices are the Sobol' nets \cite{sobol67}. Sobol's construction requires one to choose some parameters called `direction numbers' and the ones from \cite{joe:kuo:2008} are widely used.

For $q\in\natu_0$ and $C\in\{0,1\}^{m\times m}$, let $C(1{:}q,:)\in\{0,1\}^{q\times m}$
denote the first $q$ rows of $C$.
Then for $q_1,\dots,q_s\in\natu_0$ let
$$
C^{\bsq}=C^{(q_1,\dots,q_s)}=
\begin{pmatrix}
C_1(1{:}q_1,:)\\
C_2(1{:}q_2,:)\\
\vdots\\
C_s(1{:}q_s,:)
\end{pmatrix}\in\{0,1\}^{\sum_{j=1}^sq_j\times m}.
$$
The matrices $C_1,\dots,C_s$ generate a $(t,m,s)$-net in base $2$
if $C^{\bsq}$ is full-rank whenever $q_1+\cdots+q_s= m-t$.
In this case the rank is taken over the finite field of $2$ elements,
that is, it is taken in arithmetic modulo $2$.

\subsection{Fourier-Walsh decomposition}

For $k\in\natu_0$ and $x\in[0,1)$, the $k$'th univariate
Walsh function $\walk$ is defined by
$$
\walk(x) = (-1)^{\vec{x}^\tran\vec{k}}.
$$
The inner product above has only finitely many
nonzero entries in it.  For $\bsk \in\natu_0^s$
$\walbsk:[0,1)^s\to\{-1,1\}$ is defined by
$$
\walbsk(\bsx) =\prod_{j=1}^s\mathrm{wal}_j(x_j)
=(-1)^{\sum_{j=1}^s\vec{k}_j^\tran \vec{x}_j}.
$$

The functions $\walbsk$ for $\bsk\in\natu_0^s$
form a complete orthonormal basis of $L^2[0,1)^s$~\cite{dick:pill:2010}.
We can then write
\begin{align*}
f(\bsx) &\sim \sum_{\bsk\in\natu_0^s}\hat f({\bsk}) \walbsk(\bsx),\quad\text{for}\\
\hat f({\bsk}) &= \int_{[0,1)^s}f(\bsx)\walbsk(\bsx)\rd\bsx.
\end{align*}
The Fourier-Walsh representation above for $f$ denotes
equality in mean square.

We will need some bounds on the decay of
Walsh coefficients.
The Walsh coefficients of smooth functions tend to
decay with increasing index, although in
quite different ways than Fourier coefficients
decay. For example, with $s=1$ and $r$
times differentiable $f$, a few coefficients
decay at the $O(1/k)$ rate while the majority
are much smaller. See  \cite{dick:2009} for these
results.

Our analysis of $\hat\mu$ requires several quantities
related to the indices $\bsk\in\natu_0^s$.  For each $k\in\natu_0$
we write $k = \sum_{\ell=1}^\infty a_\ell 2^{\ell-1}$ and then 
define the corresponding set $\kappa = \{\ell\in \natu\mid a_\ell=1\}$.
Any appearance of $k$ or $\kappa$ specifies the other one.
This $\kappa$ will select a set of rows of a scramble matrix.
The domain for $\kappa$ is $\caln=\{ K\subset \natu\mid |K|<\infty\}$.
The cardinality of $\kappa$ is $|\kappa|$ which is then the
number of nonzero bits in the binary expansion of $k$.
We use $\lceil \kappa\rceil=\max_{\ell\in\kappa}\ell$ to denote
the last element of $\kappa$ which determines the fineness of the
function $\walk$.  By convention, $\lceil\emptyset\rceil=0$.

Corresponding to $\bsk\in\natu_0^s$ there is
$\bskappa\in\caln^s$, with $\kappa_j$ corresponding to $k_j$
for each $j\in 1{:}s$. Now $\lceil\bskappa\rceil$ is the vector
with $j$'th component $\lceil\kappa_j\rceil$.
For $u\subseteq 1{:}s$ we use $\lceil \kappa_j,j\in u\rceil$
to denote a subvector of $\lceil\bskappa\rceil$
containing $\lceil\kappa_j\rceil$ for all $j\in u$.

In some settings we have several tuples of sets
such as $\bskappa_1$ and $\bskappa_2$.  Their $j$'th components
can be denoted via $\kappa_{1j}$ and $\kappa_{2j}$,
however for a specific value of $j$ such as $j=2$ we
use commas to make $\kappa_{1,2}$ more clearly distinct from $\bskappa_{12}$.

A theorem from \cite{yosh:2017} that we need uses
the sum of the largest $\alpha$ elements of a vector
or all of those elements if that vector has fewer than
$\alpha$ elements. In \cite{superpolymulti} we used
$\alpha=\infty$ which generates an $L^1$ norm. In the present setting we need $\alpha=2$.
For $\kappa\in\caln$ let
$$
\lceil \kappa\rceil_2 = 
\begin{cases}
0, & \kappa=\emptyset\\
\lceil\kappa\rceil, & |\kappa|=1\\
\max\{\ell_1+\ell_2\mid \ell_1,\ell_2\in\kappa, \ell_1\ne\ell_2\}, &\text{else.}
\end{cases}
$$
We use $\normtwo{\bskappa}$ for the vector with $j$'th
component $\normtwo{\kappa_j}$.
Note that the sum of the largest $q\le p$ absolute values of
the components of a vector $\bsx\in\real^p$
is a norm on $\real^p$ \cite{gran:boyd:ye:2006}.
We use $q=2$ with a convention for $p<2$.

In addition to the largest and sum of two largest elements, we also need notation for the second largest element. For this we use
$$
\second{\kappa}=\normtwo{\kappa}-\lceil\kappa\rceil
$$
with $\second{\kappa}=0$ when $|\kappa|\le1$.

We define some aggregate norms on $\bskappa$:
$$
\Vert\bskappa\Vert_0=\sum_{j=1}^s|\kappa_j|,
\quad
\normone{\bskappa}=\sum_{j=1}^s\lceil\kappa_j\rceil
\quad\text{and}\quad
\Vert\normtwo{\bskappa}\Vert_1=\sum_{j=1}^s\normtwo{\kappa_j}.
$$
The first is the total number of bits in all
the $k_j$, which is why the subscript is $0$
and not $1$. The next ones sum over $j$ the
indices of the highest one or two bits of $\kappa_j$.

We define `supports' of $\bskappa\in\caln^s$ and $\bsk\in\natu_0^s$ as
$$\supp(\bskappa)=\{j\in 1{:}s\mid \kappa_j\ne\emptyset\}
\quad\text{and}\quad \supp(\bsk) = \{j\in1{:}s\mid k_j>0\},$$
respectively, with $\supp(\bskappa)=\supp(\bsk)$. 
We have the following bound for Walsh coefficients
in terms of these quantities.

\begin{theorem}\label{thm:Walshcoefficientbound}
Let $f$ have continuous mixed partial derivatives up to order 2 in each variable $x_j$ on $[0,1]^s$. Then
\begin{align*}|\hat{f}(\bsk)|&\leq 2^{-\Vert\normtwo{\bskappa}\Vert_1-\Vert|\bskappa|\wedge2\Vert_1}\sup_{\bsx_{\supp(\bsk)}\in[0,1)^{|\supp(\bsk)|}}
\biggl|
\int_{[0,1]^{s-|\supp(\bsk)|}}
f^{|\bskappa|\wedge2}(\bsx)\rd\bsx_{-\supp(\bsk)}\biggr|
\end{align*}
where $\Vert |\bskappa|\wedge2\Vert_1=\sum_{j=1}^s\min(|\kappa_j|,2)$ and
\begin{align*}
f^{|\bskappa|\wedge2}
&
=\frac{\partial^{\sum_{j=1}^s\min(|\kappa_j|,2)}f}{\partial x_1^{\min(|\kappa_1|,2)}\cdots\partial x_s^{\min(|\kappa_s|,2)}}.
\end{align*}
\end{theorem}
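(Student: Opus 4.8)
The plan is to reduce the $s$-dimensional integral defining $\hat f(\bsk)$ to a product of one-dimensional estimates and to extract the claimed decay factor one coordinate at a time by repeated integration by parts against the univariate Walsh functions. Since $\walbsk(\bsx)=\prod_{j=1}^s\mathrm{wal}_{k_j}(x_j)$ and $\mathrm{wal}_0\equiv1$, I would first integrate out the coordinates $j\notin\supp(\bsk)$, for which $k_j=0$, setting $g(\bsx_{\supp(\bsk)})=\int_{[0,1]^{s-|\supp(\bsk)|}}f(\bsx)\rd\bsx_{-\supp(\bsk)}$, so that $\hat f(\bsk)=\int g(\bsx_{\supp(\bsk)})\prod_{j\in\supp(\bsk)}\mathrm{wal}_{k_j}(x_j)\rd\bsx_{\supp(\bsk)}$. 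The continuity of the mixed partials up to order $2$ justifies differentiating under the integral sign, so that the derivatives taken in the support coordinates commute with the integral over the remaining ones; this is precisely what will eventually produce $f^{|\bskappa|\wedge2}$ together with the outer supremum over $\bsx_{\supp(\bsk)}$.

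The heart of the argument is a one-dimensional bound. For $k\ge1$ put $V_k(x)=\int_0^x\mathrm{wal}_k(t)\rd t$. Because the highest set bit of $k$ sits at position $\lceil\kappa\rceil$, on each dyadic block of length $2^{-(\lceil\kappa\rceil-1)}$ the function $\mathrm{wal}_k$ takes a constant value on the first half and its negative on the second half; hence $V_k$ is a disjoint sum of tent functions, one per block, each of height $2^{-\lceil\kappa\rceil}$. Consequently $V_k$ vanishes at every multiple of $2^{-(\lceil\kappa\rceil-1)}$, and a direct area computation gives $\int_0^1|V_k(x)|\rd x=2^{-\lceil\kappa\rceil-1}$. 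One integration by parts then yields $\int_0^1 g\,\mathrm{wal}_k=-\int_0^1 g'V_k$ with no boundary term, and bounding by $\sup_x|g'(x)|\int_0^1|V_k|$ already produces the exponent $-\lceil\kappa\rceil-1=-\normtwo{\kappa}-\min(|\kappa|,2)$ in the case $|\kappa|=1$.

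For $|\kappa|\ge2$ I would integrate by parts a second time using $V_k^{(2)}(x)=\int_0^x V_k(t)\rd t$. Writing $k'$ for $k$ with its top bit deleted, so that $\kappa'=\kappa\setminus\{\lceil\kappa\rceil\}$ has top element $\second{\kappa}$, the sign of the tent in each block equals $\mathrm{wal}_{k'}$ evaluated on that block. Since $k'\ge1$ these signs sum to zero over a complete set of blocks, so $\int_0^1 V_k=0$ and both boundary terms vanish, giving $\int_0^1 g\,\mathrm{wal}_k=\int_0^1 g''V_k^{(2)}$. Evaluating $V_k^{(2)}$ at the block boundaries shows it equals $2^{-\lceil\kappa\rceil-1}$ times the coarse-scale primitive $V_{k'}$ sampled at those points, and the same tent computation applied to $k'$ gives $\int_0^1|V_{k'}|=2^{-\second{\kappa}-1}$; together these yield $\int_0^1|V_k^{(2)}|\le2^{-\lceil\kappa\rceil-\second{\kappa}-2}=2^{-\normtwo{\kappa}-2}$, the exponent for $|\kappa|\ge2$.

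To assemble the claim I would apply these univariate estimates in each support coordinate with $p_j=\min(|\kappa_j|,2)$, bounding the iterated integral by $\sup_{\bsx_{\supp(\bsk)}}|\partial^{p}g(\bsx_{\supp(\bsk)})|\prod_{j\in\supp(\bsk)}\int_0^1|V_{k_j}^{(p_j)}|$, which collapses all the suprema into a single supremum over $\bsx_{\supp(\bsk)}$. Here $\partial^{p}g=\int_{-\supp(\bsk)}f^{|\bskappa|\wedge2}\rd\bsx_{-\supp(\bsk)}$ by differentiation under the integral, and multiplying the per-coordinate factors $2^{-\normtwo{\kappa_j}-\min(|\kappa_j|,2)}$ (with the non-support coordinates contributing $2^{0}$) gives exactly $2^{-\Vert\normtwo{\bskappa}\Vert_1-\Vert|\bskappa|\wedge2\Vert_1}$. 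The main obstacle I anticipate is the sharp $L^1$ bound on the second primitive $V_k^{(2)}$: one must exploit the block-recursive self-similarity relating $V_k^{(2)}$ to $V_{k'}$ to obtain the constant $2^{-\normtwo{\kappa}-2}$ rather than a weaker $2^{-\lceil\kappa\rceil-\second{\kappa}}$, and must confirm that the second boundary term truly vanishes, which is exactly where the hypothesis $|\kappa_j|\ge2$ enters through the cancellation $\int_0^1 V_{k_j}=0$. Checking that the within-block corrections to $V_k^{(2)}$ do not degrade this constant is the one place where a careful, though routine, computation is needed.
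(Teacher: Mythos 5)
Your proposal is correct, but it takes a different route from the paper: the paper's entire proof of Theorem~\ref{thm:Walshcoefficientbound} is a one-line citation to Theorem 2 of Yoshiki \cite{yosh:2017} (specialized to $\alpha=2$, $p=\infty$), whereas you reconstruct a self-contained argument by iterated integration by parts against the Walsh primitives $V_k$ and $V_k^{(2)}$. Your one-dimensional analysis is sound: $V_k$ is indeed a union of tents of height $2^{-\lceil\kappa\rceil}$ on blocks of length $2^{-(\lceil\kappa\rceil-1)}$ with signs given by $\mathrm{wal}_{k'}$ (where $k'$ is $k$ with its top bit removed), the boundary terms vanish exactly as you say ($V_k(1)=0$ always, and $V_k^{(2)}(1)=\int_0^1 V_k=0$ precisely when $|\kappa|\ge2$, which is where the case split $\min(|\kappa|,2)$ enters), and the per-coordinate constants multiply to $2^{-\Vert\normtwo{\bskappa}\Vert_1-\Vert|\bskappa|\wedge2\Vert_1}$ with non-support coordinates contributing $2^0$. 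The one step you flagged as needing care — the sharp bound $\int_0^1|V_k^{(2)}|\le 2^{-\normtwo{\kappa}-2}$ — does go through, and in fact holds with equality: on each block the deviation of $V_k^{(2)}$ from the secant $2^{-\lceil\kappa\rceil-1}V_{k'}$ integrates to zero (the tent primitive and its chord enclose equal areas over a full block), and since the block grid of scale $2^{-(\lceil\kappa\rceil-1)}$ refines the sign pattern of $V_{k'}$ (whose tents live at scale $2^{-(\second{\kappa}-1)}$), the monotone function $V_k^{(2)}$ cannot change sign inside a block; hence $\int_{B_i}|V_k^{(2)}|=|\int_{B_i}V_k^{(2)}|=2^{-\lceil\kappa\rceil-1}|\int_{B_i}V_{k'}|$, and summing gives $2^{-\lceil\kappa\rceil-1}\cdot 2^{-\second{\kappa}-1}=2^{-\normtwo{\kappa}-2}$. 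What each approach buys: the citation gets the paper a more general statement essentially for free (Yoshiki's result covers arbitrary smoothness order $\alpha$ and $L^p$ norms of the mixed derivative, of which the theorem uses only $\alpha=2$, $p=\infty$), while your argument is elementary, exhibits exactly where each factor of $2^{-\lceil\kappa\rceil}$, $2^{-\second{\kappa}}$ and the additive $\Vert|\bskappa|\wedge2\Vert_1$ come from, and shows the univariate $L^1$ constants are tight for this specific case.
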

\begin{proof}
This is from Theorem 2 of Yoshiki \cite{yosh:2017}
after taking $\alpha=2$ and $p=\infty$.
Our $\Vert\normtwo{\bskappa}\Vert_1+\Vert|\bskappa|\wedge2\Vert_1$
is his $\mu'_\alpha(\bsk_v)$ for $\alpha=2$ and $v=s(\bsk)$.
\end{proof}

We will not need the full strength of this result.
It will suffice for us to replace 
$2^{-\Vert\normtwo{\bskappa}\Vert_1-\Vert|\bskappa|\wedge2\Vert_1}$ by $2^{-\Vert\normtwo{\bskappa}\Vert_1}$.

The integration error satisfies
$$
\hat\mu_\infty-\mu
\sim\frac1n\sum_{i=0}^{n-1}
\sum_{\bsk\in\natu_0^s}\hat f(\bsk)\walbsk(\bsx_i)-\mu
=
\sum_{\bsk\in\natu_*^s}\hat f(\bsk)\frac1n\sum_{i=0}^{n-1}\walbsk(\bsx_i).
$$
Next we present a result on the
coefficient of $\hat f(\bsk)$ from
\cite{superpolymulti}.
For $\bsk\in\natu_0^s$, let $\cz(\bsk)$ be the event that $\sum_{j=1}^s \vec{k{}}_j^\tran M_j C_j=\bszero \tmod 2$. Similarly, let $Z(\bsk)$ be the indicator function $\one_{\cz(\bsk)}$.
We also use
$$
S(\bsk) = (-1)^{\sum_{j=1}^s\vec{k}_j^\tran \vec{D}_j}$$
the `sign of $\bsk$'.
We have the following mean square representation
of $f$.

\begin{theorem}\label{thm:decomp}
Let $f$ satisfy the condition in Theorem~\ref{thm:Walshcoefficientbound}  and let $\bsx_i$ be defined
by~\eqref{eqn:xequalMCiplusD} for $0\le i<2^m$.
Then
\begin{equation}\label{eqn:errordecomposition}
    \hat{\mu}_{\infty}-\mu\sim\sum_{\bsk\in \natu_*^s}Z(\bsk)S(\bsk)\hat{f}(\bsk).
\end{equation}
\end{theorem}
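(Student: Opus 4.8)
The plan is to treat the mean-square representation displayed just above the statement,
$$\hat\mu_\infty-\mu\sim\sum_{\bsk\in\natu_*^s}\hat f(\bsk)\,\frac1n\sum_{i=0}^{n-1}\walbsk(\bsx_i),$$
as given, and reduce the whole theorem to the single per-realization identity
$$\frac1n\sum_{i=0}^{n-1}\walbsk(\bsx_i)=Z(\bsk)\,S(\bsk),\qquad \bsk\in\natu_*^s.$$
Once this is established, substituting it term by term into the displayed sum reproduces \eqref{eqn:errordecomposition} with the same mean-square interpretation, since the substitution is an equality of coefficients and does not touch the (infinite) sum over $\bsk$.

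To prove the identity, I would first expand $\walbsk$ through its definition and insert the point construction \eqref{eqn:xequalMCiplusD}. Because $(-1)^{(\cdot)}$ depends on its exponent only through parity, I can reduce the exponent modulo $2$ and factor:
$$\walbsk(\bsx_i)=(-1)^{\sum_{j=1}^s\vec{k}_j^\tran\vec{x}_{ij}}=(-1)^{\sum_{j=1}^s\vec{k}_j^\tran M_jC_j\vec{i}}\,(-1)^{\sum_{j=1}^s\vec{k}_j^\tran\vec{D}_j}.$$
The second factor is exactly $S(\bsk)$ and is independent of $i$, so it pulls out of the average. Here I would note that for $E=\infty$ each $\vec{k}_j$ still has only finitely many nonzero bits (the largest index being $\lceil\kappa_j\rceil$), so $\vec{k}_j^\tran M_j$ is a well-defined finite row vector and every inner product above is a finite sum; no convergence question arises at this step.

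Next I would evaluate the $\vec{i}$-dependent factor as a character sum. Writing $\vec{w}^\tran=\sum_{j=1}^s\vec{k}_j^\tran M_jC_j\tmod2$, and using that $\vec{i}=\vec{i}[m]$ runs over all of $\{0,1\}^m$ as $i$ runs over $\ints_{2^m}$, I get
$$\frac1n\sum_{i=0}^{n-1}(-1)^{\vec{w}^\tran\vec{i}}=\frac1{2^m}\sum_{\vec{i}\in\{0,1\}^m}(-1)^{\vec{w}^\tran\vec{i}}=\one\{\vec{w}=\bszero\}=Z(\bsk),$$
where the middle equality is the orthogonality of additive characters of $\ftwo^m$ (if some coordinate $w_\ell=1$, pairing $\vec{i}$ with its flip in bit $\ell$ cancels the terms), and the last equality is the definition of the event $\cz(\bsk)$. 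Combining this with the factored $S(\bsk)$ yields the identity, completing the reduction.

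The computation itself is routine character algebra; the genuinely delicate point is the mean-square interpretation inherited from the opening display, namely the legitimacy of replacing $f$ by its $L^2$-convergent Walsh series inside the finite average $\frac1n\sum_i f(\bsx_i)$ and interchanging the sum over $\bsk$ with that average. This is exactly where the coefficient decay from Theorem~\ref{thm:Walshcoefficientbound} is used to control the tail and justify the interchange. Since that display is already stated above, I would simply cite it and keep the interchange discussion brief, devoting the write-up to the exact evaluation $\frac1n\sum_i\walbsk(\bsx_i)=Z(\bsk)S(\bsk)$.
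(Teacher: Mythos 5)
Your proposal is correct, and it is worth noting that it is substantially more informative than the paper's own proof, which consists entirely of the citation ``This is from \cite{superpolymulti}'': you have reconstructed, essentially verbatim, the argument that the cited paper supplies. The core of your write-up is exactly right and is the right thing to emphasize: the identity $\frac1n\sum_{i=0}^{n-1}\walbsk(\bsx_i)=Z(\bsk)S(\bsk)$ holds \emph{surely}, for every realization of the $M_j$ and $\vec{D}_j$, by the factorization $\walbsk(\bsx_i)=(-1)^{\sum_j\vec{k}_j^\tran M_jC_j\vec{i}}\,(-1)^{\sum_j\vec{k}_j^\tran\vec{D}_j}$ together with character orthogonality on $\ftwo^m$, and your observation that $\vec{k}_j$ has only finitely many nonzero bits (so $\vec{k}_j^\tran M_j$ is a well-defined finite sum of rows even at precision $E=\infty$) disposes of the only convergence issue in that step. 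One small correction to your closing discussion: the interchange of the sum over $\bsk$ with the average over $i$ does \emph{not} actually require the coefficient decay of Theorem~\ref{thm:Walshcoefficientbound}. Because the digital shift makes each $\bsx_i\sim\dunif[0,1)^s$ marginally, the truncated Walsh series evaluated at $\bsx_i$ converges to $f(\bsx_i)$ in mean square over the randomization for any $f\in L^2[0,1)^s$, and the right-hand side of \eqref{eqn:errordecomposition} is an orthogonal series (the terms are uncorrelated, as the paper notes in Section~\ref{sec:skew}) with $\sum_{\bsk}\hat f(\bsk)^2<\infty$, hence unconditionally convergent in $L^2$ of the probability space. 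The smoothness hypothesis is carried in the theorem statement not to license this interchange but to support the later moment computations (absolute summability in the third-moment manipulations of Theorem~\ref{thm:main}). So your proof stands with that one attribution adjusted, and it has the merit of making the paper self-contained where the authors chose to defer to prior work.
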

\begin{proof}
This is from \cite{superpolymulti}.    
\end{proof}

\section{Skewness}\label{sec:skew}

We have seen empirically that for large
$m$, the RQMC error $\hat\mu-\mu$ tends to
have a nearly symmetric distribution \cite{superpolymulti}.  Here
we study the skewness of $\hat\mu=\hat\mu_\infty$.
Without loss of generality we take $\mu=0$, so $\hat f(\bszero)=0$.
Then the skewness of $\hat\mu$ is
$$
\gamma = \e( \hat\mu^3 )/\e(\hat\mu^2)^{3/2}.
$$
Accuracy of confidence intervals is also studied
using the kurtosis
$$
\theta=\e( \hat\mu^4 )/\e(\hat\mu^2)^2-3.
$$
This quantity is commonly denoted by $\kappa$ but
we reserve $\kappa$ for subsets of $\natu$.
Neither $\gamma$ nor $\theta$ is well defined if $\e(\hat\mu^2)=0$.   We assume from here on that $\e(\hat\mu^2)>0$. With this assumption, we are ruling out integrands that
can be integrated with zero error by scrambled nets.  Those are integrands with a finite Walsh function
expansion.  A smooth function with a finite Walsh expansion must equal $\mu$ almost everywhere, which then provides only a trivial integration problem.

For $\bsx_i\simiid \dunif[0,1]^s$ the skewness of
$\hat\mu$ is $\gamma(f)/\sqrt{n}$ where $\gamma(f)$
is the skewness of a single function value $f(\bsx_0)$,
and the kurtosis is $\theta(f)/n$ where $\theta(f)$
is the kurtosis of a single  value $f(\bsx_0)$.
The RQMC points we use are far from independent and
so that formula does not apply.  
For instance, an analysis in \cite[Section 3]{superpolyone} 
shows that for differentiable $f$, there
is generally an event $\ce$ of probability $\Omega(1/n)$
with $\e(\hat\mu^2\giv \ce)=\Omega(n^{-2})$.
That event alone gives $\e(\hat\mu^4)=\Omega(n^{-5})$
and when $\e(\hat\mu^2)=O(n^{-3+\epsilon})$ we get
$\theta = \Omega( n^{1+2\epsilon})$.
Because $\theta\to\infty$, we do not get a central limit theorem for
the scrambling of \cite{mato:1998:2}.

We will consider integrands $f$ that
have continuous partial derivatives of order
up to $2$ in each of the $s$ components of
$\bsx$ on $[0,1]^s$.  This makes those partial derivatives absolutely continuous,
hence bounded, and as a result
$\e(|\hat\mu|^3)<\infty$ which we need in
order to have $\e(\hat\mu^3)$ be well defined.

The contribution of $\hat f(\bsk)$ 
to the error in~\eqref{eqn:errordecomposition}
is the product of $S(\bsk)\in\{-1,1\}$
and $Z(\bsk)\in\{0,1\}$. From the way
that $\vec{D}_j$ are generated we
have $S(\bsk)\sim\dunif\{-1,1\}$ and
these signs are pairwise independent
(though not fully independent).
The collection $(S(\bsk)\mid\bsk\in\natu^s)$
is independent of $(Z(\bsk)\mid\bsk\in\natu^s)$.
As a result each $Z(\bsk)S(\bsk)\hat f(\bsk)$
has a symmetric distribution with atoms
at $0$, $\hat f(\bsk)$ and $-\hat f(\bsk)$.
The error contributions in~\eqref{eqn:errordecomposition} all have
symmetric distributions and they are
uncorrelated because for $\bsk\ne\bsk'$
$$
\e\bigl( Z(\bsk)S(\bsk)Z(\bsk')S(\bsk')\bigr)
=\e\bigl( Z(\bsk)Z(\bsk')\bigr)\e\bigl(S(\bsk)S(\bsk')\bigr)=0
$$
using pairwise independence of
the signs.  The sum itself need not have a symmetric
distribution. Indeed we have seen asymmetric empirical distributions for small $m$.  However for large $m$ we typically see
very symmetric histograms.  One example,
typical of what we have seen, is 
in the top right panel of Figure 1 in
\cite{ci4rqmc}.

We can write the numerator of $\gamma$
as 
\begin{align*}
    \e(\hat{\mu}^3)=\sum_{\bsk_1\in \natu_*^s}\sum_{\bsk_2\in \natu_*^s}\sum_{\bsk_3\in \natu_*^s}\hat{f}(\bsk_1)\hat{f}(\bsk_2)\hat{f}(\bsk_3)\e\biggl(\prod_{\ell=1}^3Z(\bsk_\ell)\biggr)\e\biggl(\prod_{\ell=1}^3S(\bsk_\ell)\biggr).
\end{align*}
Letting $\bsk_{\ell}=(k_{\ell,1},\dots,k_{\ell,s})$ for $\ell=1,2,3$
\begin{align}\label{eq:prodsign}
\prod_{\ell=1}^3S(\bsk_\ell)
= (-1)^{\sum_{j=1}^s\vec{D}_j^\tran\sum_{\ell=1}^3\vec{k}_{\ell,j}}.
\end{align}
The expression in~\eqref{eq:prodsign} can have nonzero expectation
only if $\sum_{\ell=1}^3 \vec{k}_{\ell,j}=\bszero$ for each $j$. In that case we can write 
$$\vec{k}_{3,j}=-\vec{k}_{1,j}-\vec{k}_{2,j}=\vec{k}_{1,j}+\vec{k}_{2,j},$$
with the last step following because
we work modulo $2$.
Let $\bsk_{12}\in\natu_0^s$ be the vector
for which $\vec{k}_{12,j}=\vec{k}_{1,j}+\vec{k}_{2,j}\tmod 2$ for $j\in1{:}s$.
When $\e(\prod_{\ell=1}^3S(\bsk_\ell))\ne0$
it is because $\bsk_3=\bsk_{12}$ and then $\Pr(\,\prod_{\ell=1}^3S(\bsk_\ell)=1)=1$. As a result we may write
\begin{align}\label{eq:numer}
    \e(\hat{\mu}^3)=\sum_{\bsk_1\in \natu_*^s}\sum_{\bsk_2\in \natu_*^s}\hat{f}(\bsk_1)\hat{f}(\bsk_2)\hat{f}(\bsk_{12})\e\bigl(Z(\bsk_1)Z(\bsk_2)\bigr)
\end{align}
after noting that $Z(\bsk_1)=Z(\bsk_2)=1$
implies that
$$\sum_{j=1}^s\vec{k{}}_{1,j}^\tran M_jC_j
=\sum_{j=1}^s\vec{k{}}_{2,j}^\tran M_jC_j=\bszero$$
which then yields $Z(\bsk_{12})=1$.



\begin{proposition}\label{prop:basic}
Let $A\in\{0,1\}^{K\times m}$ have rank $r$
over $\ints_2$
and let $\bsy\in\{0,1\}^K$.
Then the set of solutions $\bsx\in\{0,1\}^m$
to $A\bsx=\bsy \tmod 2$
has cardinality $0$ or $2^{m-r}$.
\end{proposition}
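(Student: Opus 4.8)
The plan is to treat $A\bsx=\bsy$ as an affine-linear system over the field $\ftwo$ and reduce the count to the size of the kernel of $A$. First I would dispose of the inconsistent case: the solution set is nonempty if and only if $\bsy$ lies in the range (column space) of $A$, viewed as an $\ftwo$-linear map $\ftwo^m\to\ftwo^K$. If $\bsy$ is not in the range, the solution set is empty and has cardinality $0$, which matches one of the two allowed values in the claim.

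For the consistent case, suppose some $\bsx_0\in\ftwo^m$ satisfies $A\bsx_0=\bsy$. Then for any $\bsx\in\ftwo^m$ we have $A\bsx=\bsy$ if and only if $A(\bsx-\bsx_0)=\bszero$, i.e. $\bsx-\bsx_0\in\ker(A)$. Hence the solution set is exactly the coset $\bsx_0+\ker(A)$, and the translation map $\bsw\mapsto\bsx_0+\bsw$ is a bijection from $\ker(A)$ onto this set. It therefore suffices to count $\ker(A)$, and this count does not depend on the particular choice of $\bsx_0$.

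The kernel $\ker(A)$ is an $\ftwo$-linear subspace of $\ftwo^m$. By the rank--nullity theorem over a field, $\dim\ker(A)=m-\rank(A)=m-r$. A $d$-dimensional vector space over $\ftwo$ has exactly $2^d$ elements, since each of the $d$ coordinates in a chosen basis ranges independently over $\{0,1\}$. Thus $|\ker(A)|=2^{m-r}$, and by the bijection above the solution set has the same cardinality $2^{m-r}$.

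There is no genuine obstacle here: the statement is a textbook consequence of rank--nullity. The only points requiring care are (i) confirming that $\{0,1\}^m$ equipped with mod-$2$ arithmetic is literally the vector space $\ftwo^m$, so that rank, kernel, and rank--nullity all apply over a field rather than over $\ints$; and (ii) observing that a coset of the kernel inherits the kernel's cardinality via translation, so the dichotomy between cardinality $0$ and $2^{m-r}$ is exactly the dichotomy between $\bsy\notin\mathrm{range}(A)$ and $\bsy\in\mathrm{range}(A)$.
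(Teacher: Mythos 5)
Your proposal is correct and follows essentially the same route as the paper's proof: both identify the solution set as a coset of the kernel of $A$ over $\ftwo$ and apply rank--nullity to get $\dim\ker(A)=m-r$, hence $2^{m-r}$ solutions in the consistent case and $0$ otherwise. Your version merely spells out the translation bijection and the field-versus-ring caveat more explicitly than the paper does.
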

\begin{proof}
The null space of $A$ has dimension $m-r$ so $A\bsx=\bszero\tmod 2$
has $2^{m-r}$ solutions $\bsx$. If $A\bsx=\bsy\tmod 2$ has any solution $\bsx'$
then we get all solutions by adding to $\bsx'$ an element of that
null space (including $\bszero$ which gives us back $\bsx'$). 
Otherwise there are $0$ solutions.
\end{proof}

\begin{lemma}\label{lem:PrZbound}
If $C_1,\dots,C_s$ generate a digital $(t,m,s)$-net in base $2$ and $\bsk\in\natu_*^s$, then 
$$\Pr\big(\cz(\bsk)\big)\leq 2^{-m+t+s}.$$
\end{lemma}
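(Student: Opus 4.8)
The plan is to reduce the event $\cz(\bsk)$ to the vanishing of a single random $\ftwo$-linear combination of the rows of $C_1,\dots,C_s$, and then to bound the chance of that vanishing using the full-rank (net) hypothesis together with Proposition~\ref{prop:basic}. Writing $\vec{w}_j^\tran=\vec{k}_j^\tran M_j$, the event $\cz(\bsk)$ is exactly $\sum_{j=1}^s\vec{w}_j^\tran C_j=\bszero\tmod 2$, in which row $b$ of $C_j$ is weighted by the $b$th coordinate of $\vec{w}_j$. So the first job is to pin down the joint distribution of the weight vectors $\vec{w}_j=M_j^\tran\vec{k}_j$.

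Here I would use the triangular form of the scramble. Let $\ell_j=\lceil\kappa_j\rceil$ be the position of the highest set bit of $k_j$. Since $M_j$ has unit diagonal, zeros above it and independent $\dunif\{0,1\}$ entries below it, inspecting $\vec{w}_j=M_j^\tran\vec{k}_j$ gives a clean dichotomy. If $\ell_j\le m$ then $(\vec{w}_j)_b=0$ for $b>\ell_j$, the diagonal one forces $(\vec{w}_j)_{\ell_j}=1$, and $(\vec{w}_j)_1,\dots,(\vec{w}_j)_{\ell_j-1}$ are jointly uniform on $\{0,1\}^{\ell_j-1}$, the uniformity coming from the otherwise-unused entries $M_j[\ell_j,b]$ with $b<\ell_j$, each of which appears in exactly one coordinate. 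If instead $\ell_j>m$ the distinguished diagonal entry lies outside the $m$ columns and $\vec{w}_j$ is uniform on all of $\{0,1\}^m$. As the $M_j$ are independent across $j$, the free coordinates assemble into a vector $\vec{r}$ that is uniform on $\{0,1\}^d$ for the appropriate $d$.

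Next I would peel off the deterministic leading rows. Gathering the forced rows $C_j(\ell_j,:)$ (one for each $j$ with $0<\ell_j\le m$) into a fixed vector $\vec{y}$, the event $\cz(\bsk)$ turns into the affine system $\vec{r}^\tran\tilde C=\vec{y}\tmod 2$, where $\tilde C$ stacks exactly the randomly weighted rows: rows $1{:}(\ell_j-1)$ of $C_j$ when $\ell_j\le m$ and all of $C_j$ when $\ell_j>m$. Applying Proposition~\ref{prop:basic} to the transposed system, the number of admissible $\vec{r}\in\{0,1\}^d$ is $0$ or $2^{\,d-\rank(\tilde C)}$, and since $\vec{r}$ is uniform this yields $\Pr(\cz(\bsk))\le 2^{-\rank(\tilde C)}$.

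It remains to show $\rank(\tilde C)\ge m-t-s$, and this is where the net hypothesis is used. Put $q_j=\min(\ell_j,m)$ and let $\hat C=C^{\bsq}$ be $\tilde C$ with the (at most $s$) forced rows reinstated. If $\sum_j q_j<m-t$ then no block can have $\ell_j>m$ (such a block alone would give $\sum_j q_j\ge m\ge m-t$), so every block is of the first type, $\hat C$ is a row-subset of a full-rank $C^{\bsq'}$ with $\sum_j q_j'=m-t$ and hence has independent rows; the forced weights make the combining vector nonzero, so $\cz(\bsk)$ is impossible and $\Pr(\cz(\bsk))=0$. Otherwise $\sum_j q_j\ge m-t$, and $\hat C$ contains a row-subset $C^{\bsq''}$ with $\sum_j q_j''=m-t$ that the net property makes full-rank, so $\rank(\hat C)\ge m-t$; deleting the at-most-$s$ forced rows lowers the rank by at most $s$, giving $\rank(\tilde C)\ge m-t-s$ and thus $\Pr(\cz(\bsk))\le 2^{-m+t+s}$. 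The main obstacle is the structural step of the second paragraph: reading off the law of $M_j^\tran\vec{k}_j$ and verifying the ``forced leading one plus uniform tail'' versus ``fully uniform'' dichotomy, with genuine joint uniformity of the free coordinates and independence across $j$. Once that is secured, the counting via Proposition~\ref{prop:basic} and the rank bookkeeping against the net property are routine.
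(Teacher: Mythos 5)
Your proof is correct. Note that the paper itself does not prove this lemma: it simply cites Corollary~1 of the reference \cite{superpolymulti}, so you have supplied a self-contained argument where the authors deferred. Your argument is sound at every step: the dichotomy for the law of $\vec{w}_j=M_j^\tran\vec{k}_j$ is right (in fact slightly simpler than you suggest, since the entries of column $b$ of $M_j$ affect only coordinate $b$ of $\vec{w}_j$, so the coordinates are automatically independent, each made uniform for $b<\lceil\kappa_j\rceil$ by the fresh bit $M_j(\lceil\kappa_j\rceil,b)$, with the diagonal forcing $(\vec{w}_j)_{\lceil\kappa_j\rceil}=1$ when $\lceil\kappa_j\rceil\le m$); the reduction to the affine system $\vec{r}^\tran\tilde C=\vec{y}$ with $\vec{r}$ uniform, the $0$-or-$2^{d-\rank}$ count from Proposition~\ref{prop:basic}, and the rank bookkeeping $\rank(\tilde C)\ge\rank(\hat C)-s\ge m-t-s$ via the net characterization of $C^{\bsq}$ all go through, and your case split on $\sum_j q_j<m-t$ (where $\bsk\ne\bszero$ forces a unit coefficient on a row of a row-independent matrix, giving probability $0$) versus $\sum_j q_j\ge m-t$ is handled correctly, including the observation that a block with $\lceil\kappa_j\rceil>m$ cannot occur in the first case. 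Your route is also exactly the toolkit the paper deploys for the harder pairwise bound in Theorem~\ref{cor:Z1Z2bound} — the distributional identity $\vec{k}_j^\tran M_j\eqd M_j(\lceil\kappa_j\rceil,:)$, Proposition~\ref{prop:basic}, the net property giving rank at least $m-t$, and a rank drop of at most $s$ from discarding the forced leading rows — so your proof is a faithful single-event specialization of the paper's own two-event argument, and it makes explicit the $\lceil\kappa_j\rceil>m$ case that the paper's later proofs treat only implicitly.
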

\begin{proof}
This is Corollary 1 of \cite{superpolymulti}.
\end{proof}

Next we develop some results on the probability that $\cz(\bsk_1)$ and $\cz(\bsk_2)$ both occur.  A key quantity in this work is the set
\begin{align}\label{eq:jboth}
J_{1\cap2}:= \bigl\{j\in 1{:}s\mid \lceil\kappa_{1j}\rceil
=\lceil\kappa_{2j}\rceil\bigr\}.
\end{align}
We also write $M_{1\cap2}$ for $(M_j\mid j\in J_{1\cap2})$, the corresponding tuple of scramble matrices.
\begin{lemma}\label{lem:Z1Z2indep}
If $J_{1\cap2}=\emptyset$, then 
    $\cz(\bsk_1)$ and $\cz(\bsk_2)$
    are independent. Otherwise, they are independent conditionally on scrambles $M_j,j\in J_{1\cap 2}$, that is
\begin{align*}
\Pr\bigl(\cz(\bsk_1)\cap \cz(\bsk_2)\giv M_{1\cap2}\bigr)=\Pr\bigl(\cz(\bsk_1)\giv M_{1\cap 2}\bigr)\Pr\bigl(\cz(\bsk_2)\giv M_{1\cap 2}\bigr).
\end{align*}
\end{lemma}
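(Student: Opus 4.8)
The plan is to reduce each event $\cz(\bsk_r)$, $r\in\{1,2\}$, to a linear condition on the scramble matrices and then exploit the triangular structure of the $M_j$. Writing $\vec{v}_{r,j}=\vec{k}_{r,j}^\tran M_j=\sum_{\ell\in\kappa_{r,j}}M_j(\ell,:)$ and $\vec{g}_r=\sum_{j=1}^s \vec{v}_{r,j}C_j$, the event $\cz(\bsk_r)$ is exactly $\{\vec{g}_r=\bszero\}$. Set $L_{r,j}=\lceil\kappa_{r,j}\rceil$. The engine of the argument is a top-row uniformization fact: since $M_j$ is lower triangular with unit diagonal and independent sub-diagonal entries, $\vec{v}_{r,j}$ uses only the rows indexed by $\kappa_{r,j}$, the largest of which is $L_{r,j}$; conditional on every row of $M_j$ other than row $L_{r,j}$, the free entries of that row make $\vec{v}_{r,j}$ uniformly distributed on a coset of a fixed subspace and independent of those other rows.

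First I would record the consequence that drives everything. Partition the coordinates outside $J_{1\cap2}$ into the disjoint sets $A=\{j:L_{1,j}>L_{2,j}\}$ and $B=\{j:L_{2,j}>L_{1,j}\}$. For $j\in A$ the top row $L_{1,j}$ of $\vec{v}_{1,j}$ lies strictly above every row used by $\vec{v}_{2,j}$, hence is not reused; the uniformization fact then shows $\vec{v}_{1,j}$ is uniform on its coset and independent of $\vec{v}_{2,j}$. Symmetrically, for $j\in B$ the vector $\vec{v}_{2,j}$ is uniform and independent of $\vec{v}_{1,j}$. Because the $M_j$ are independent across $j$, the higher members $\{\vec{v}_{1,j}\}_{j\in A}$ and $\{\vec{v}_{2,j}\}_{j\in B}$ form two independent families, each independent of the collection of lower members $H:=(\{\vec{v}_{2,j}\}_{j\in A},\{\vec{v}_{1,j}\}_{j\in B})$ and of $M_{1\cap2}$.

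Next I would condition on $(M_{1\cap2},H)$ and verify conditional independence at that level. Given this conditioning, the contributions to $\vec{g}_1$ from $j\in J_{1\cap2}\cup B$ are frozen, so $\vec{g}_1$ equals a fixed vector plus $\sum_{j\in A}\vec{v}_{1,j}C_j$, a function only of the free family $\{\vec{v}_{1,j}\}_{j\in A}$; likewise $\vec{g}_2$ is a fixed vector plus $\sum_{j\in B}\vec{v}_{2,j}C_j$, a function only of $\{\vec{v}_{2,j}\}_{j\in B}$. These two free families are independent, and each residual probability is $0$ or a power of two by Proposition~\ref{prop:basic}, so $\Pr(\cz(\bsk_1)\cap\cz(\bsk_2)\mid M_{1\cap2},H)=\Pr(\cz(\bsk_1)\mid M_{1\cap2},H)\,\Pr(\cz(\bsk_2)\mid M_{1\cap2},H)$.

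Finally I would remove the extra conditioning on $H$ by a tower-property argument, and this is where I expect the main obstacle to lie. The naive split leaves $\vec{g}_1$ and $\vec{g}_2$ sharing the lower-row randomness of coordinates in $A$ and $B$, so conditional independence given only $M_{1\cap2}$ is not immediate; moreover the factor $\Pr(\cz(\bsk_1)\mid M_{1\cap2},H)$ genuinely depends on $H$ (through the fixed shift in $\vec{g}_1$), as does $\Pr(\cz(\bsk_2)\mid M_{1\cap2},H)$. The resolution is that the first factor depends on $H$ only through $\{\vec{v}_{1,j}\}_{j\in B}$ and the second only through $\{\vec{v}_{2,j}\}_{j\in A}$; since $A$ and $B$ are disjoint they involve disjoint, mutually independent matrices $M_j$, so given $M_{1\cap2}$ the two factors are functions of independent random variables and hence uncorrelated. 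Taking expectations over $H$ and applying the tower property then gives $\Pr(\cz(\bsk_1)\cap\cz(\bsk_2)\mid M_{1\cap2})=\Pr(\cz(\bsk_1)\mid M_{1\cap2})\Pr(\cz(\bsk_2)\mid M_{1\cap2})$. The case $J_{1\cap2}=\emptyset$ is identical with no conditioning on $M_{1\cap2}$, yielding full independence.
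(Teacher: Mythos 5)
Your proposal is correct and takes essentially the same route as the paper: both arguments rest on the fact that for $j\notin J_{1\cap2}$ the lower-triangular, unit-diagonal structure of $M_j$ makes the combination with the larger leading row (your $\vec{v}_{r,j}\eqd M_j(\lceil\kappa_{rj}\rceil,:)$) uniform on a fixed coset and independent of everything determined by the lower rows, after which one conditions on $M_{1\cap2}$ and integrates out the lower-row information via the tower property. Your explicit $A$/$B$ partition and the intermediate conditioning on $H$ simply spell out, in a more careful way, the bookkeeping that the paper compresses into its pairwise conditional-independence claim for $j,j^*\notin J_{1\cap2}$ and its displayed computation conditioning on the first $\lceil\kappa_{1j}\rceil-1$ rows $F$.
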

\begin{proof}
If $J_{1\cap 2}=1{:}s$, then conditionally on $M_{1\cap2}$, the events $\cz(\bsk_1)$ and
$\cz(\bsk_2)$ are constants and hence trivially
independent.
Otherwise, the event $\cz(\bsk)$ can be rewritten as 
\begin{align}\label{eq:eventzk}
\sum_{j\notin J_{1\cap 2}} \vec{k{}}_{j}^\tran M_j C_j=-\sum_{j\in J_{1\cap 2}} \vec{k{}}_{j}^\tran M_j C_j,\end{align} 
where the right hand side is nonrandom conditionally on $M_{1\cap 2}$. It suffices to show that $\sum_{j\notin J_{1\cap 2}} \vec{k{}}_{1j}^\tran M_j C_j$ is independent of $\sum_{j\notin J_{1\cap 2}} \vec{k{}}_{2j}^\tran M_j C_j$ given $M_{1\cap2}$. 
That follows once we show $\vec{k{}}_{1j}^\tran M_{j}$ is conditionally independent of $\vec{k{}}_{2j^*}^\tran M_{j^*}$ for every pair $j,j^*\notin J_{1\cap 2}$. 
Suppose at first that $J_{1\cap2}\ne\emptyset$.
If $j\ne j^*$
then $M_j$, $M_{j^*}$ and $M_{1\cap2}$ are mutually independent, making $M_j$ and $M_{j^*}$ conditionally independent given $M_{1\cap2}$, and so we only need to prove it for $j=j^*$. Because $j\notin J_{1\cap 2}$, $\lceil\kappa_{1j}\rceil\neq \lceil \kappa_{2j}\rceil$. Assume without loss of generality that $\lceil\kappa_{1j}\rceil> \lceil \kappa_{2j}\rceil$. Then $\vec{k{}}_{2j}^\tran M_{j}$ is fully determined by the first $\lceil\kappa_{1j}\rceil-1$ rows of $M_j$ while 
$$\vec{k{}}_{1j}^\tran M_{j}=M(\lceil\kappa_{1j}\rceil,:)+\sum_{r\in \kappa_{1j},r<\lceil\kappa_{1j}\rceil }M(r,:)\overset{d}{=}M(\lceil\kappa_{1j}\rceil,:),$$
which does not depend on the first $\lceil\kappa_{1j}\rceil-1$ rows of $M_j$.
Let $F$ represent the first $\lceil\kappa_{1j}\rceil-1$ rows  of $M_j$. Then
\begin{align*}
\e( Z(\bsk_1)Z(\bsk_2)\giv M_{1\cap 2})
&=\e\bigl( \e(Z(\bsk_1)Z(\bsk_2)\giv  F,M_{1\cap 2})\giv M_{1\cap2}\bigr)\\
&= \e\bigl( Z(\bsk_1)\e(Z(\bsk_2)\giv M_{1\cap2})\giv M_{1\cap2}\bigr)\\
&=\Pr(\cz(\bsk_1)\giv M_{1\cap2})\Pr(\cz(\bsk_2)\giv M_{1\cap2})
\end{align*}
as required. Finally, if $J_{1\cap2}=\emptyset$, then the same argument goes through except that conditioning on  $M_{1\cap2}$ is not conditioning at all, and we find that $M_j$ and $M_{j^*}$ are simply independent.

\end{proof}

\begin{theorem}\label{cor:Z1Z2bound}
     For any pair of distinct $\bsk_1$ and $\bsk_2$, we have
     \begin{align}\label{eq:bound3by2}
\Pr\bigl(\cz(\bsk_1)\cap \cz(\bsk_2)\bigr)\leq 2^{-(3/2)(m-t-s)}.
\end{align}
If further $\sum_{j\notin J_{1\cap 2}}\lceil\bskappa_{1j}\rceil\geq m-t$ or $s\leq 2$, then 
\begin{align}\label{eq:Z1Zbound}
    \Pr\bigl(\cz(\bsk_1)\cap \cz(\bsk_2)\bigr)\leq 2^{-2(m-t-s)}.
\end{align}
\end{theorem}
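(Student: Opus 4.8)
The plan is to combine the conditional independence of Lemma~\ref{lem:Z1Z2indep} with a rank count for each factor, and then to exploit a threefold symmetry. Write $E=\cz(\bsk_1)\cap\cz(\bsk_2)$ and bring in the third index $\bsk_3=\bsk_{12}$. Since $\vec k_{1j}+\vec k_{2j}+\vec k_{3j}=\bszero$ for all $j$, any two of $\cz(\bsk_1),\cz(\bsk_2),\cz(\bsk_3)$ imply the third, so $E=\cz(\bsk_a)\cap\cz(\bsk_b)$ for every pair $\{a,b\}\subset\{1,2,3\}$; I may assume $\Pr(E)>0$, which forces each $\Pr(\cz(\bsk_\ell))>0$. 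Conditioning on the shared scrambles $(M_j\giv j\in J_{a\cap b})$ and applying Lemma~\ref{lem:Z1Z2indep} gives $\Pr(E)=\e[\Pr(\cz(\bsk_a)\giv M_{a\cap b})\Pr(\cz(\bsk_b)\giv M_{a\cap b})]$. Given those scrambles, $\cz(\bsk_a)$ asks a uniformly random $\ints_2$-combination of the free rows $\{C_j(r,:)\giv r<\lceil\kappa_{aj}\rceil,\ j\notin J_{a\cap b}\}$ to equal a fixed target, because $\vec k_{aj}^\tran M_j\eqd M_j(\lceil\kappa_{aj}\rceil,:)$ has iid $\dunif\{0,1\}$ entries below its leading one and the $M_j$ are independent across $j$. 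Proposition~\ref{prop:basic} then yields $\Pr(\cz(\bsk_a)\giv M_{a\cap b})\le 2^{-\rho_a}$ uniformly, where $\rho_a$ is the $\ints_2$-rank of that free-row matrix, so
\[\Pr(E)\le 2^{-\rho_a}\,\Pr(\cz(\bsk_b))\le 2^{-\rho_a-(m-t-s)}\]
using Lemma~\ref{lem:PrZbound} on the second factor.

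To make the ranks concrete I would use the net property in the form: for depths $q_j\ge0$ over an index set $T$, the rows $\{C_j(r,:)\giv r\le q_j,\ j\in T\}$ have rank at least $\min(\sum_{j\in T}q_j,\,m-t)$, since a sub-selection with row total $\le m-t$ is full rank and any larger one contains such a block. Hence $\rho_a\ge\min(\sum_{j\notin J_{a\cap b}}\max(\lceil\kappa_{aj}\rceil-1,0),\,m-t)$. The positivity $\Pr(\cz(\bsk_\ell))>0$ also forces $Q_\ell:=\sum_j\lceil\kappa_{\ell j}\rceil>m-t$: if $Q_\ell\le m-t$ the full leading block is independent, so the nonzero target sum of leading rows cannot lie in the span of the free rows and $\Pr(\cz(\bsk_\ell))=0$. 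Finally, at each coordinate in the combined support the leading bit cancels in the XOR defining $\bsk_3$, so exactly two of $\lceil\kappa_{1j}\rceil,\lceil\kappa_{2j}\rceil,\lceil\kappa_{3j}\rceil$ equal the largest value $N_j$ and the third is strictly smaller, $n_j<N_j$; let $S_\ell$ collect the coordinates whose unique smallest index is $\ell$, so that $J_{a\cap b}$ meets the combined support exactly in $S_c$ where $c$ is the third index.

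For \eqref{eq:Z1Zbound} under the first hypothesis I would take $(a,b)=(1,2)$: at most $s$ coordinates $j\notin J_{1\cap2}$ have $\kappa_{1j}\ne\emptyset$, so the free-row total for $\rho_1$ is at least $\sum_{j\notin J_{1\cap2}}\lceil\kappa_{1j}\rceil-s\ge(m-t)-s$, giving $\rho_1\ge m-t-s$ and $\Pr(E)\le 2^{-2(m-t-s)}$. When $s\le2$ there are at most two coordinates in the combined support but three indices, so some index $c$ is smallest at no coordinate, i.e.\ $S_c=\emptyset$; then the complementary pair shares no scramble affecting both events, so Lemma~\ref{lem:Z1Z2indep} makes them independent and $\Pr(E)=\Pr(\cz(\bsk_a))\Pr(\cz(\bsk_b))\le 2^{-2(m-t-s)}$.

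For the general bound \eqref{eq:bound3by2} I would set $\alpha_\ell=\sum_{j\in S_\ell}\max(n_j-1,0)$ and $\beta_\ell=\sum_{j\in S_\ell}(N_j-1)$. For the ordered pair with smallest index $a$ and large contributing part $S_b$, the free-row total for $\rho_a$ is exactly $\alpha_a+\beta_b$. Summing $Q_\ell>m-t$ over the three indices and absorbing the $O(s)$ counting discrepancy gives $2\sum_\ell\beta_\ell+\sum_\ell\alpha_\ell>3(m-t-s)$, hence $\sum_\ell(\alpha_\ell+\beta_\ell)>\tfrac32(m-t-s)$. The mean of $\alpha_a+\beta_b$ over the six ordered pairs $a\ne b$ equals $\tfrac13\sum_\ell(\alpha_\ell+\beta_\ell)>\tfrac12(m-t-s)$, so some pair achieves $\rho_a\ge\tfrac12(m-t-s)$ and $\Pr(E)\le 2^{-\rho_a-(m-t-s)}\le 2^{-(3/2)(m-t-s)}$. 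I expect the main obstacle to be the rank bookkeeping: establishing the lower bound on $\rho_a$ uniformly over the conditioning scrambles (which rests on the identity $\vec k_{aj}^\tran M_j\eqd M_j(\lceil\kappa_{aj}\rceil,:)$ and cross-$j$ independence), and then tracking the additive $s$-slack across the partition $S_1,S_2,S_3$ carefully enough that the exponents land at exactly $\tfrac32$ and $2$ times $(m-t-s)$ rather than something weaker.
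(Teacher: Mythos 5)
Your proposal is correct, and while it runs on the same chassis as the paper's proof --- the conditional independence of Lemma~\ref{lem:Z1Z2indep}, the reduction $\vec k_{aj}^\tran M_j\eqd M_j(\lceil\kappa_{aj}\rceil,:)$ turning $\Pr(\cz(\bsk_a)\giv M_{a\cap b})$ into $2^{-\rank}$ of a free-row matrix via Proposition~\ref{prop:basic}, Lemma~\ref{lem:PrZbound} on the second factor, and the swap freedom from $\cz(\bsk_1)\cap\cz(\bsk_2)=\cz(\bsk_a)\cap\cz(\bsk_b)$ --- the combinatorial core is executed differently. The paper makes a single comparison swap to arrange $\rank(C^{\lceil\kappa_{1j}-1,\,j\in A\rceil})\geq\rank(C^{\lceil\kappa_{1j}-1,\,j\in J_{1\cap2}\rceil})$ and then uses rank subadditivity, since the two ranks together dominate $\rank(C^{\lceil\kappa_1\rceil-\bsone})\geq\rank(C^{\lceil\kappa_1\rceil})-s\geq m-t-s$, giving free rank at least $(m-t-s)/2$. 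You instead exploit the full threefold symmetry: recording $N_j>n_j$ at each support coordinate (exactly two of the three leading indices equal $N_j$, which is correct since the top bit cancels in the XOR), partitioning the support into $S_1,S_2,S_3$ by smallest leading index, summing the support conditions $Q_\ell\geq m-t+1$ over all three indices (your strict inequality is right, since a nontrivial combination with unit coefficients on the leading rows cannot vanish when the selected rows are independent), and averaging $\alpha_a+\beta_b$ over the six ordered pairs to locate one exceeding $(m-t-s)/2$; the rank then follows from your observation, valid by padding and restriction, that leading-row selections of total depth $q$ have rank at least $\min(q,m-t)$. Your route makes visible exactly where the exponent $3/2$ comes from and replaces the paper's explicit $s\le2$ case analysis by a clean pigeonhole (some $S_c=\emptyset$); the paper's subadditivity step is shorter but more opaque. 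One cosmetic point: when $S_c=\emptyset$, the set $J_{a\cap b}$ of~\eqref{eq:jboth} need not literally be empty, as it contains every $j$ with $\kappa_{aj}=\kappa_{bj}=\emptyset$ (for which $\lceil\cdot\rceil=0$), so you should not invoke the unconditional branch of Lemma~\ref{lem:Z1Z2indep} verbatim; but since those $M_j$ enter neither event, conditioning on them is vacuous and the product formula integrates to the product of marginals, so your conclusion stands --- the paper's own $s\le2$ reduction glosses the same point.
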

\begin{proof}
As in the proof of Lemma~\ref{lem:Z1Z2indep} we use 
$J_{1\cap2}$ from~\eqref{eq:jboth}
and the corresponding scrambles are in $M_{1\cap 2}$. We only need to consider cases where $\Vert\lceil\bskappa_{1}\rceil\Vert_1\geq m-t$ because otherwise $\Pr\bigl(\cz(\bsk_1)\bigr)=0$ by the $(t,m,s)$-net property. 

Let $A=\{j\in1{:}s\mid \lceil\kappa_{1j}\rceil\neq \lceil \kappa_{2j}\rceil, \lceil\kappa_{1j}\rceil\geq 1 \}$, that is $A=J^c_{1\cap2}\cap s(\kappa_1)$.
We let $C^{\lceil\kappa_{1j},j\in A\rceil}$ denote the matrix
made up of the first $\lceil\kappa_{1j}\rceil$ rows of $M_j$ for $j\in A$.
Then $C^{\lceil\kappa_{1j}-1,\ j\in A\rceil}$
has the first $\lceil\kappa_{1j}\rceil-1$ rows of those $M_j$.


Because each $M_j$ is lower-triangular with ones on the diagonal and $\dunif\{0,1\}$ random entries below the diagonal, we see that $\vec{k{}}_{1j}^\tran M_j\eqd M_j(\lceil\kappa_j\rceil,:)$. From independence of $M_j$, $\sum_{j\in A}\vec{k{}}_{1j}^\tran M_j C_j
\eqd\sum_{j\in A} M_j(\lceil\kappa_j\rceil,:)C_j$. Using $M_j(\lceil\kappa_j\rceil,\lceil\kappa_j\rceil)=1$, we may write
$$\sum_{j\in A} M_j(\lceil\kappa_j\rceil,:)C_j
= \sum_{j\in A}\sum_{\ell=1}^{\lceil\kappa_j\rceil-1} M_j(\lceil\kappa_j\rceil,\ell)C_j(\ell,:) + \sum_{j\in A} C_j(\lceil\kappa_j\rceil,:).
$$ 
We notice the first term on the right hand side is uniformly distributed on the row space $C^{\lceil\kappa_{1j}-1,\ j\in A\rceil}$ while the second term is nonrandom. We therefore conclude that $\sum_{j\in A}\vec{k{}}_{1j}^\tran M_j C_j$ is uniformly distributed on an affine space of dimension $\rank(C^{\lceil\kappa_{1j}-1,\ j\in A\rceil})$ and
\begin{align*}
    \Pr\bigl(\cz(\bsk_1)\mid M_{1\cap 2}\bigr)&=\Pr\Biggl(\,\sum_{j\in A} \vec{k{}}_{j}^\tran M_j C_j=-\sum_{j\in J_{1\cap 2}} \vec{k{}}_{j}^\tran M_j C_j\biggm| M_{1\cap 2}\Biggr)\\
    &\leq 2^{-\rank(C^{\lceil\kappa_{1j}-1,\, j\in A\rceil})}
\end{align*}
where we have used Proposition~\ref{prop:basic} in the last inequality. Next, we apply Lemmas~\ref{lem:PrZbound} and~\ref{lem:Z1Z2indep} to get
\begin{align*}
    \Pr\bigl(\cz(\bsk_1)\cap \cz(\bsk_2)\bigr)
    &=\e\bigl(\Pr(\cz(\bsk_1)\giv M_{1\cap2})\Pr(\cz(\bsk_2)\giv M_{1\cap 2})\bigr)\\
    &\leq 2^{-\rank(C^{\lceil\kappa_{1j}-1,\, j\in A\rceil})}\, \e\bigl(\Pr\bigl(\cz(\bsk_2)\giv M_{1\cap 2}\bigr)\bigr)\\
    &= 2^{-\rank(C^{\lceil\kappa_{1j}-1,\, j\in A\rceil})}\Pr\bigl(\cz(\bsk_2)\bigr)\\
    &\leq 2^{-m+t+s-\rank(C^{\lceil\kappa_{1j}-1,\, j\in A\rceil})}.
\end{align*}
We can assume $\rank(C^{\lceil\kappa_{1j}-1,\,j\in A\rceil})\geq \rank(C^{\lceil\kappa_{1j}-1,\,j\in J_{1\cap 2}\rceil})$ because otherwise we can replace $\bsk_2$ by $\bsk_{12}$. Now
\begin{align*}
&\,\quad\rank(C^{\lceil\kappa_{1j}-1,\, j\in A\rceil})
+\rank(C^{\lceil\kappa_{1j}-1,\,j\in J_{1\cap 2}\rceil
})\\
&\geq \rank(C^{\lceil\kappa_{1}\rceil-\bsone})\\
&\geq \rank(C^{\lceil\kappa_{1}\rceil})-s
\end{align*}
and the $(t,m,s)$-net property ensures $\rank(C^{\lceil\kappa_{1}\rceil})\geq m-t$, 
so we know $\rank(C^{\lceil\kappa_{1j}-1,\, j\in A\rceil})\geq 2^{-(m-t-s)/2}$ and 
\begin{align*}
    \Pr\bigl(\cz(\bsk_1)\cap \cz(\bsk_2)\bigr)\leq 2^{-(3/2)(m-t-s)}.
\end{align*}

When $\sum_{j\in A}\lceil\bskappa_{1j}\rceil=\sum_{j\notin J_{1\cap 2}}\lceil\bskappa_{1j}\rceil\geq m-t$, $C^{\lceil\kappa_{1j},j\in A\rceil}$ has rank at least $m-t$ and 
$$\rank(C^{\lceil\kappa_{1j}-1,\, j\in A\rceil})\geq \rank(C^{\lceil\kappa_{1j},j\in A\rceil})-|A|\geq m-t-s.$$
Therefore in this case, 
\begin{align*}
    \Pr\bigl(\cz(\bsk_1)\cap \cz(\bsk_2)\bigr)\leq 2^{-2(m-t-s)}.
\end{align*}

To prove the above bound also holds when $s\leq 2$, it suffices to show we always have $J_{1\cap 2}=\emptyset$ after replacing $\bsk_1$ or $\bsk_2$ by $\bsk_{12}$ if necessary, as $J_{1\cap 2}=\emptyset$ implies $\sum_{j\notin J_{1\cap 2}}\lceil\bskappa_{1j}\rceil=\Vert\lceil\bskappa_{1}\rceil\Vert_1\geq m-t$. When $s=1, J_{1\cap 2}=\{1\}$ or when $s=2, J_{1\cap 2}=\{1,2\}$,  we simply replace $\bsk_2$ by $\bsk_{12}$ and enforce $J_{1\cap 2}=\emptyset$. When $s=2, J_{1\cap 2}=\{1\}$, we can assume $\lceil \kappa_{12}\rceil <\lceil \kappa_{22}\rceil$ because otherwise we can switch $\bsk_1$ and $\bsk_2$. Then after replacing $\bsk_2$ with $\bsk_{12}$, we have $\lceil \kappa_{11}\rceil >\lceil \kappa_{21}\rceil$ and $\lceil \kappa_{21}\rceil < \lceil \kappa_{22}\rceil$, so $J_{1\cap 2}=\emptyset$. The $s=2, J_{1\cap 2}=\{2\}$ case can be handled in a similar way. This concludes our discussion.
\end{proof}

In general, the $3/2$ coefficient in equation~\eqref{eq:bound3by2} cannot be improved when $s\geq 3$. For instance, suppose $s=3$ and $m$ is an even number. Consider $\kappa_{1,1}=\kappa_{2,1}=\kappa_{1,2}=\kappa_{2,3}=\{m/2\}$ and $\kappa_{1,3}=\kappa_{2,2}=\emptyset$, in which case $J_{1\cap 2}=\{1\}$. Suppose there exist row vectors $v_1,v_2,v_3\in\{0,1\}^r$ with $r=m/2$ such that $v_1(r)=v_2(r)=v_3(r)=1$ and
$$v_1 C_1(1{:}r,:)=v_2C_2(1{:}r,:)=v_3C_3(1{:}r,:)\ \tmod 2.$$
Then in this case $\cz(\bsk_1)\cap \cz(\bsk_2)$ occurs when $M_1(r,1{:}r)=v_1$, $M_2(r,1{:}r)=v_2$ and $M_3(r,1{:}r)=v_3$, which occurs with $2^{-(3/2)m+3}$ probability given each $M_j(r,1{:}r)$ contains $r-1$ random entries. See Section~\ref{sec:example} for a concrete construction of $C_1,C_2,C_3$ for which such set of $v_1,v_2,v_3$ exists.

\section{Random generator matrices}\label{sec:rand}

We would like a better bound than
the one in~\eqref{eq:bound3by2}.  We can show
that a better bound holds for most generator
matrices if we consider generator matrices
$C_j\in\{0,1\}^{m\times m}$ made up of independent uniformly distributed elements. We notice that the counterexample in the previous section had
a common vector in the row spaces of $C^{\lceil\kappa_{1j},j\in J_{1\cap 2}\rceil}$, $C^{\lceil\kappa_{1j},j\notin J_{1\cap 2}\rceil}$ and $C^{\lceil\kappa_{2j},j\notin J_{1\cap 2}\rceil}$. That is unlikely to happen when their total rank is much less than $2m$. More precisely, 
we will show for any fixed $\epsilon>0$, with high probability random $C_j$'s satisfy 
$$\Pr\bigl(\cz(\bsk_1)\cap \cz(\bsk_2) \bigr)\leq 2^{-2(1-\epsilon)m}$$
for all pairs of distinct nonzero $\bsk_1$ and $\bsk_2$.

With random $C_j$ and random $M_j$ we get
a random product $M_jC_j$, and we could wonder whether these two
randomizations can be replaced by one. First of all, removing $C_j$ is
the same as taking $C_j=I_m$. We cannot do for $s\ge2$ because
applying a random $M_j$ will preserve the $t$ value of taking $C_1=C_2=\dots =C_d=I_m$.
For $s\ge2$, that value is $t=m-1$, a very bad outcome. Second, the product $M_jC_j$ does not have the uniform distribution on $\{0,1\}^{M\times m}$, so replacing
$M_jC_j$ by a single uniform random element of $\{0,1\}^{M\times m}$ changes the distribution. To see
this, there is a $2^{-m}$ probability that the first column of $C_j$ is all zeros.  Then for $M>m$, the probability that the first column of $M_jC_j$ is all zeros cannot be as small as $2^{-M}$ which it must be under a uniform distribution on $\{0,1\}^{M\times m}$. 

We recognize that the widely used constructions of
digital nets are very specially constructed. So they might then
be quite atypical compared to random generator matrices.

First, we need to find the average behavior of $t$ and $\Pr\bigl(\cz(\bsk_1)\cap \cz(\bsk_2)\bigr)$ under such randomization. Random generator matrices are discussed in Section 5.4 of \cite{dick:pill:2010}. 
Below we will use $\cm$ and $\cc$ as shorthand for $(M_j, j=1{:}s)$ and $(C_j, j=1{:}s)$, respectively. We will also write $t$ as $t(\cc)$ and $\cz(\bsk)$ as $\cz(\bsk,\cc)$ to emphasize their dependence on $\cc$.  Our main interest is in digital nets in base $2$, but some of these findings generalize to prime bases which are of independent interest.

\begin{theorem}\label{thm:dp5.4}
For a prime number $p$, let all elements of $C_1,\dots,C_s\in\ints_p^{m\times m}$ be independent $\dunif(\ints_p)$ random variables and let $t(\cc)$ be
the smallest $t\in\natu_0$ for which they generate
a $(t,m,s)$-net in base $p$. Then
$$\Pr\bigl(t(\cc)\le \lceil (s-1)\log_p(m)-\log_p(1-\alpha)\rceil\bigr)\ge\alpha$$
holds for $0\le \alpha<1$.
\end{theorem}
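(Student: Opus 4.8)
The plan is to control, for each integer $t\ge 0$, the failure probability $\Pr\bigl(t(\cc)>t\bigr)$ that $\cc$ does \emph{not} generate a $(t,m,s)$-net, and then to substitute the specific value $t=t^*:=\lceil(s-1)\log_p(m)-\log_p(1-\alpha)\rceil$. First I would record the monotonicity fact that $t(\cc)\le t$ implies $t(\cc)\le t+1$: deleting a row from a matrix of full row rank leaves a matrix of full row rank, so the $(t,m,s)$-net property is inherited by larger $t$. Consequently $\{t(\cc)>t\}$ is exactly the event that $C^{\bsq}$ is rank-deficient for \emph{some} $\bsq\in\natu_0^s$ with $q_1+\cdots+q_s=m-t$, and a union bound over these $\bsq$ reduces the whole problem to a single per-composition estimate.

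For a fixed such $\bsq$, the rows of $C^{\bsq}$ are distinct rows taken from the independent matrices $C_1,\dots,C_s$, whose entries are i.i.d.\ $\dunif(\ints_p)$; hence $C^{\bsq}$ is a uniformly random $(m-t)\times m$ matrix over $\ints_p$, and its rank-deficiency probability does not depend on which $\bsq$ we chose. The next step is the standard finite-field count: a uniform $k\times m$ matrix over $\ints_p$ with $k=m-t\le m$ has full row rank with probability $\prod_{i=0}^{k-1}(1-p^{i-m})$, so by the elementary inequality $1-\prod_i(1-a_i)\le\sum_i a_i$ for $a_i\in[0,1]$,
\begin{align*}
\Pr\bigl(C^{\bsq}\text{ is rank-deficient}\bigr)\le\sum_{i=0}^{m-t-1}p^{i-m}=\frac{p^{-t}-p^{-m}}{p-1}\le\frac{p^{-t}}{p-1}.
\end{align*}

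Since the number of compositions $\bsq\in\natu_0^s$ with $\sum_j q_j=m-t$ is $\binom{m-t+s-1}{s-1}$, the union bound gives
\begin{align*}
\Pr\bigl(t(\cc)>t\bigr)\le\binom{m-t+s-1}{s-1}\frac{p^{-t}}{p-1}.
\end{align*}
To finish I would substitute $t=t^*$. By definition of $t^*$ we have $p^{t^*}\ge m^{s-1}/(1-\alpha)$, i.e.\ $p^{-t^*}\le(1-\alpha)\,m^{-(s-1)}$, while $\binom{m-t^*+s-1}{s-1}\le m^{s-1}$ whenever $t^*\ge s-1$. Combining these with $p-1\ge1$ yields $\Pr(t(\cc)>t^*)\le(1-\alpha)/(p-1)\le 1-\alpha$, which rearranges to the claimed $\Pr(t(\cc)\le t^*)\ge\alpha$.

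The bulk of the care goes into making the constants close at $t^*$. The bound $\binom{m-t^*+s-1}{s-1}\le m^{s-1}$ requires $t^*\ge s-1$: each of the $s-1$ factors $m-t^*+i$ with $1\le i\le s-1$ is then at most $m$, and dividing by $(s-1)!$ only helps. This holds because $t^*\ge(s-1)\log_p(m)\ge s-1$ as soon as $m\ge p$, the regime that covers the main case of base $2$ with $m\ge2$. The remaining cases are essentially trivial and handled separately: if $t^*\ge m$ then $t(\cc)\le m\le t^*$ deterministically, and the few small-$m$ (that is, $m<p$) cases can be checked directly. I would also verify explicitly that all entries of $C^{\bsq}$ are independent — they are, since within each $C_j$ we use distinct top rows and across $j$ the matrices are independent — since that independence is precisely what licenses the clean finite-field full-rank formula.
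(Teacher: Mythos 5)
Your proof is correct in substance and is genuinely different from the paper's, which offers no argument at all for Theorem~\ref{thm:dp5.4}: the paper simply cites Theorem 5.37 of Dick and Pillichshammer, crediting Larcher, Niederreiter and Schmid. What you have done, in effect, is rediscover the mechanism the paper uses to prove its own sharper Theorem~\ref{thm:tofC}: the exact full-rank probability $\prod_{i=0}^{k-1}(1-p^{i-m})$ for a uniform $k\times m$ matrix over $\ints_p$, a bound on the rank-deficiency probability, the count $\binom{m-t+s-1}{s-1}$ of compositions $\bsq$, and a union bound. Your per-matrix bound $(p^{-t}-p^{-m})/(p-1)$ is slightly sharper than the paper's inductive bound $p^{-t}$, so your final estimate $\Pr(t(\cc)>t^*)\le(1-\alpha)/(p-1)$ even improves the cited result by a factor of $1/(p-1)$; the comparison also makes transparent that the difference between Theorem~\ref{thm:dp5.4} and the paper's improvement in Theorem~\ref{thm:tofC} lies only in the choice of threshold and in retaining the $(s-1)!$, not in any new probabilistic idea.

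One loose end deserves attention: you justify $\binom{m-t^*+s-1}{s-1}\le m^{s-1}$ only under $t^*\ge s-1$, which you obtain from $m\ge p$, and you dismiss $m<p$ as ``a few cases checked directly'' --- but for a fixed prime $p$ that regime still ranges over all $s$ and all $\alpha$, so it is not a finite check, and the theorem as stated carries no restriction on $m$ (unlike Theorem~\ref{thm:tofC}, which assumes $m\ge p$). Fortunately a small sharpening of your own estimate closes this: for any $t\ge1$,
\begin{align*}
\binom{m-t+s-1}{s-1}\le\binom{m+s-2}{s-1}=\prod_{i=1}^{s-1}\frac{m-1+i}{i}\le m^{s-1},
\end{align*}
since $m-1+i\le mi$ for every $i\ge1$. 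Moreover $t^*\ge1$ holds automatically whenever $\alpha>0$ (because $-\log_p(1-\alpha)>0$), and the case $\alpha=0$ is vacuous. With this replacement the case analysis on $m$ versus $p$ disappears entirely, apart from your correct observation that $t^*\ge m$ gives the conclusion deterministically since $t(\cc)\le m$ always holds.
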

\begin{proof}
This is Theorem 5.37 of \cite{dick:pill:2010}. They credit \cite{larc:nied:schm:1996}.
\end{proof}

We are able to improve this
result for general $p$ though we only
need it for $p=2$.
Then we provide a theorem to control $\Pr(\cz(\bsk_1,\cc)\cap\cz(\bsk_2,\cc))$.

\begin{theorem}\label{thm:tofC}
Let $p$ be a prime number and suppose that
    $C_j$'s are independently drawn from a uniform distribution on $\ints_p^{m\times m}$ for $m\ge p$. Let $t(\cc)$ be the smallest $t\in\natu_0$ for which they generate a $(t,m,s)$-net in base $p$. Then 
    \begin{align}\label{eq:tofC}
    \Pr\bigl(t(\cc)\geq s \log_p(m)\bigr)\leq \frac{1}{(s-1)!m}.
    \end{align}
\end{theorem}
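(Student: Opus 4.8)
The plan is to reduce the event to rank deficiency of the stacked generator matrices $C^{\bsq}$ and then to union bound. Since $t(\cc)$ is integer valued, $\{t(\cc)\ge s\log_p(m)\}=\{t(\cc)\ge T\}$ for $T:=\lceil s\log_p(m)\rceil$. By the generating characterization of a $(t,m,s)$-net in Section~\ref{sec:back}, the matrices $C_1,\dots,C_s$ fail to generate a $(T-1,m,s)$-net precisely when some $\bsq\in\natu_0^s$ with $\sum_{j=1}^s q_j=m-T+1$ has $\rank(C^{\bsq})<m-T+1$, and $t(\cc)\ge T$ is equivalent to this failure by the monotonicity of the net property in $t$. (When $T>m$ there is no such $\bsq$ and the probability is $0$, so assume $T\le m$.) This gives
$$\Pr\bigl(t(\cc)\ge T\bigr)\le\sum_{\bsq:\,\sum_j q_j=m-T+1}\Pr\bigl(\rank(C^{\bsq})<m-T+1\bigr).$$

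I would then estimate the two ingredients separately. For the number of terms, stars and bars gives exactly $\binom{m-T+s}{s-1}$ configurations $\bsq$; the hypothesis $m\ge p$ forces $\log_p(m)\ge1$, hence $T\ge s$ and $m-T+s\le m$, so this count is at most $m^{s-1}/(s-1)!$. For each fixed $\bsq$, independence of the $C_j$ and of their $\dunif(\ints_p)$ entries makes the $m-T+1$ rows of $C^{\bsq}$ independent uniform vectors in $\ints_p^m$. Revealing them one at a time and invoking the solution-counting of Proposition~\ref{prop:basic} (in base $p$ the span of $i-1$ vectors contains at most $p^{i-1}$ elements), the probability that the $i$th row lies in the span of its predecessors is at most $p^{i-1-m}$, whose largest term, at $i=m-T+1$, is $p^{-T}\le p^{-s\log_p(m)}=m^{-s}$.

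Multiplying the configuration count by the leading per-configuration term $p^{-T}$ gives $(m^{s-1}/(s-1)!)\,m^{-s}=1/((s-1)!\,m)$, the asserted bound. Conceptually the payoff comes from the threshold: taking $s\log_p(m)$ rather than the $(s-1)\log_p(m)$ of Theorem~\ref{thm:dp5.4} makes $p^{-T}$ as small as $m^{-s}$, which exactly cancels the $m^{s-1}$ growth of the number of configurations and produces the clean $m^{-1}$ rate.

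The delicate step, and the one I expect to be the real obstacle, is controlling the constant so that the bound is genuinely $1/((s-1)!\,m)$ rather than a fixed multiple of it. A naive sequential union bound charges each configuration the entire geometric sum $\sum_{i=1}^{m-T+1}p^{i-1-m}=p^{-T}\cdot p/(p-1)$, which would leave a stray factor $p/(p-1)$. Removing it requires exploiting that the rank-deficiency events overlap heavily: a single linear dependency among the rows is detected by many configurations, so one should count each dependency only at the minimal total degree at which it first appears, where every one-row reduction (deleting the last retained row of some block) still yields a full-rank matrix and the deficiency collapses to the single event ``the top row of some block lies in the span of the others,'' of probability $p^{-T}$ at degree $m-T+1$. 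Making this attribution precise—verifying that a dependency propagates upward from its minimal degree and that the bookkeeping over degrees does not silently reintroduce the geometric factor—is where the argument must be carried out carefully, and I expect it to rest on the fact that the dominant contribution genuinely comes from the top degree $m-T+1$.
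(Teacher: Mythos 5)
Your route is the paper's route: both arguments reduce the event to rank deficiency of a stacked matrix $C^{\bsq}$, count the configurations by stars and bars and bound the count by $m^{s-1}/(s-1)!$ using $T\ge s$ (which follows from $m\ge p$), and then bound the per-configuration failure probability for $q$ i.i.d.\ uniform rows of $\ints_p^m$. The only mechanical difference is in that last bound: the paper proves $1-\prod_{j=m-q+1}^m(1-p^{-j})\le p^{-m+q}$ by induction (equation~\eqref{eq:lindepbound}, whose inductive step $2p^{-m+q^*}\le p^{-m+q^*+1}$ uses $p\ge 2$), whereas your sequential union bound gives the slightly sharper $\sum_{i=1}^q p^{i-1-m}<p^{q-m}/(p-1)$. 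Measured against the benchmark $p^{q-m}$ there is no stray constant at all, so none of the ``attribution of dependencies to minimal degree'' machinery you sketch is needed, and the paper uses nothing of the kind.

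The step you flag as delicate is, however, a genuine issue --- just not one your attribution scheme can repair, because the stated constant is unattainable for the stated event. Your reduction is the careful one: $\{t(\cc)\ge s\log_p(m)\}$ is failure at level $T-1$ with $T=\lceil s\log_p(m)\rceil$, hence deficiency of a stack with $q=m-T+1$ rows, and the per-configuration probability is then of order $p^{1-T}$, not $p^{-T}$. The paper instead evaluates its bound at integer levels $t\ge s\log_p(m)$, i.e.\ $q=m-t$ rows, which controls the strictly smaller event $\{t(\cc)>t\}$; there is an off-by-one between the proof and~\eqref{eq:tofC}. The factor is genuinely present: take $s=1$ and $m=p^k$, so there is a single configuration and nothing to attribute, and $\{t(\cc)\ge \log_p m\}$ is exactly the event that the first $m-k+1$ rows of $C_1$ are dependent, which has probability $1-\prod_{j=k}^m(1-p^{-j})>p^{-k}=1/m$ (at $p=m=2$ this is $5/8>1/2$). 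So~\eqref{eq:tofC} as written fails at the boundary, and your plan to squeeze out the constant by exploiting overlaps among configurations is a dead end. What the argument (yours or the paper's) actually yields is: $\Pr\bigl(t(\cc)>t\bigr)\le ((s-1)!\,m)^{-1}$ for every integer $t\ge s\log_p(m)$, and, for the stated event, $\Pr\bigl(t(\cc)\ge s\log_p(m)\bigr)\le \frac{p}{(p-1)(s-1)!\,m}\le \frac{2}{(s-1)!\,m}$ from your geometric sum. Either form suffices for the downstream use in~\eqref{eq:badevent1} up to an immaterial constant; state one of them and stop chasing the factor.
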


\begin{proof} 
Any $q\le m$ row vectors drawn independently from a uniform distribution on $\ints_p^m$ are linearly independent over $\ints_p$ with probability $\prod_{j=m-q+1}^m (1-p^{-j})$. 
We can show by induction that the probability of $q$ linearly dependent
rows is bounded by $p^{-m+q}$.  That is
\begin{align}\label{eq:lindepbound}
1-\prod_{j=m-q+1}^m (1-p^{-j})\leq p^{-m+q}.
\end{align}
It holds for $q=1$. Next, if~\eqref{eq:lindepbound} holds for $q=q^*$, then for $q=q^*+1$
\begin{align*}
1-\prod_{j=m-q^*}^m (1-p^{-j})&\leq 1-(1-p^{-m+q^*})(1-p^{-m+q^*})\\
&<2p^{-m+q^*}\le p^{-m+q^*+1},
\end{align*}
so~\eqref{eq:lindepbound} holds in general.
For $\cc$ to generate a $(t,m,s)$-net, we need $C^{\lceil \bskappa\rceil}$ to have full rank for all $\bskappa$ with $\normone{\bskappa}=\sum_{j=1}^s\lceil\kappa_j\rceil=m-t$.

There are ${m-t+s-1\choose s-1}$ different $C^{\lceil \bskappa\rceil}$ satisfying $\normone{\bskappa}=m-t$ and each of them has $m-t$ rows. Therefore the probability that any one of them fails to have full rank is bounded by
$${m-t+s-1\choose s-1}p^{-m+(m-t)}
={m-t+s-1\choose s-1}p^{-t}.$$
Because $m\ge p$, the threshold $s\log_p(m)$ for $t$ in~\eqref{eq:tofC} is at least $s$. With $t\ge s$
$$
{m-t+s-1\choose s-1}p^{-t}\le \frac{m^{s-1}}{(s-1)!}p^{-t}.
$$
The above is further bounded by $((s-1)!m)^{-1}$ once we take $t$ to be greater than or equal to $s\log_p(m)$, establishing equation~\eqref{eq:tofC}.
\end{proof}

Theorem~\ref{thm:tofC} has a sharper
bound on $t(\cc)$ than Theorem~\ref{thm:dp5.4}.  To make the comparison, let
$\alpha=1-1/((s-1)!m)$. Then
\begin{align*}
&\Pr\bigl( t(\cc)<s\log_p(m)\bigr)\ge\alpha&&\text{(Theorem~\ref{thm:tofC}),\quad and}\\
&\Pr\bigl( t(\cc)\le \lceil s\log_p(m)+\log_p( (s-1)!)\rceil\bigr)\ge\alpha&&\text{(Theorem~\ref{thm:dp5.4})}
\end{align*}
so Theorem~\ref{thm:tofC} has a much smaller upper bound
for $t(\cc)$ when $s$ is large. It has a restriction that $m\ne 1$ (the claim holds trivially for $m=0$) but applications with $p=2$ generally use $m\ge2$.


\begin{theorem}\label{thm:pairs}
Let $C_1,\dots, C_s$ be independently drawn from a uniform distribution on $\{0,1\}^{m\times m}$. If
 $\bsk_1$ and $\bsk_2$ are distinct and nonzero 
 points in $\natu_0^s$ with $\max_j\lceil\kappa_{1j}\rceil\leq m$ and $\max_j\lceil\kappa_{2j}\rceil\leq m$, then
    \begin{align}\label{eq:pbothk}
    \Pr\bigl(\cz(\bsk_1,\cc)\cap \cz(\bsk_2,\cc)\bigr)= 2^{-2m}.
    \end{align}
\end{theorem}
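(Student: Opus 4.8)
The plan is to condition on the matrix scrambles $\cm=(M_1,\dots,M_s)$ and exploit the fact that, once $\cm$ is fixed, each event $\cz(\bsk_i,\cc)$ becomes a system of $m$ linear constraints on the columns of the $C_j$ whose per-column pieces depend on disjoint blocks of entries. Concretely, I would set $\bsu_{ij}^\tran=\vec{k}_{ij}^\tran M_j$ (so $\bsu_{ij}=M_j^\tran\vec{k}_{ij}\in\{0,1\}^m$) for $i\in\{1,2\}$ and $j\in1{:}s$, which is well defined because the hypothesis $\max_j\lceil\kappa_{ij}\rceil\le m$ forces each $\vec{k}_{ij}$ to lie in $\{0,1\}^m$. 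Then $\cz(\bsk_i,\cc)$ is the event $\sum_{j=1}^s\bsu_{ij}^\tran C_j=\bszero\tmod 2$. Reading this column by column, the $c$-th entry of $\sum_j\bsu_{ij}^\tran C_j$ is $\sum_j\bsu_{ij}^\tran C_j(:,c)$ and involves only the $c$-th columns $C_1(:,c),\dots,C_s(:,c)$. Since the entries of $\cc$ are independent and uniform, these $m$ column-blocks are mutually independent, so conditionally on $\cm$ the joint probability factors into a product over the $m$ columns of identical per-column probabilities.

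For a single column, writing $\bsw=(\bsw_1,\dots,\bsw_s)\in\{0,1\}^{sm}$ for the stacked $c$-th columns, the two constraints read $\sum_j\bsu_{ij}^\tran\bsw_j=0$ for $i=1,2$, i.e. $A\bsw=\bszero$ where $A\in\{0,1\}^{2\times sm}$ has rows $(\bsu_{i1},\dots,\bsu_{is})$. By Proposition~\ref{prop:basic} applied with this $A$ and $\bsy=\bszero$, the number of solutions is $2^{sm-\rho}$ with $\rho=\rank(A)$ over $\ftwo$, so the per-column probability is exactly $2^{-\rho}$. The crux of the argument is therefore to show $\rho=2$, i.e. that the two stacked coefficient vectors are linearly independent. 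I would argue this via invertibility: each $M_j$ is lower triangular with unit diagonal and hence invertible over $\ftwo$, so the block-diagonal map $\mathrm{diag}(M_1^\tran,\dots,M_s^\tran)$ is invertible and sends $(\vec{k}_{i1},\dots,\vec{k}_{is})$ to $(\bsu_{i1},\dots,\bsu_{is})$. Consequently the two $\bsu$-rows are independent precisely when the two stacked bit-vectors $(\vec{k}_{i1},\dots,\vec{k}_{is})$, $i=1,2$, are. Over $\ftwo$ two vectors are dependent only if one is zero or the two coincide, and the hypotheses that $\bsk_1,\bsk_2$ are nonzero and distinct rule both out. Hence $\rho=2$.

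Combining the pieces, each per-column probability equals $2^{-2}$, the conditional probability given $\cm$ is $(2^{-2})^m=2^{-2m}$, and because this value does not depend on $\cm$, taking the expectation over the scrambles yields the unconditional equality $\Pr(\cz(\bsk_1,\cc)\cap\cz(\bsk_2,\cc))=2^{-2m}$. I expect the main obstacle to be the rank-$2$ claim: one must verify that the two functionals are genuinely independent rather than merely each nonzero, which is exactly where invertibility of the $M_j$ and the distinctness and nonvanishing of $\bsk_1,\bsk_2$ enter. The remaining ingredients—the column-wise independence decomposition and the passage from ``both constraints vanish'' to the factor $2^{-\rho}$—are routine linear algebra over $\ftwo$, handled one column at a time through Proposition~\ref{prop:basic}.
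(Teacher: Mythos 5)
Your proof is correct, and it takes a genuinely different route from the paper's. The paper argues by sequential row-conditioning: using the identity $\cz(\bsk_1,\cc)\cap\cz(\bsk_2,\cc)=\cz(\bsk_1,\cc)\cap\cz(\bsk_{12},\cc)$ it first replaces $\bsk_2$ by $\bsk_{12}$ if necessary so as to find a coordinate $j^*$ with $\lceil\kappa_{1j^*}\rceil\ne\lceil\kappa_{2j^*}\rceil$, then conditions on $\cm$ and on all of $\cc$ except the single row $C_{j^*}(\lceil\kappa_{1j^*}\rceil,:)$; since $\sum_{r\in\kappa_{1j^*}}M_{j^*}(r,:)C_{j^*}$ equals that row plus terms fixed by the conditioning, it gets $\Pr\bigl(\cz(\bsk_1,\cc)\cap\cz(\bsk_2,\cc)\bigr)=2^{-m}\Pr\bigl(\cz(\bsk_2,\cc)\bigr)$, and a second application of the same device gives $\Pr\bigl(\cz(\bsk_2,\cc)\bigr)=2^{-m}$. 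You instead condition on all of $\cm$, absorb the scrambles into fixed coefficient vectors $\bsu_{ij}=M_j^\tran\vec{k}_{ij}$, and let the randomness of $\cc$ do the work column by column, reducing the whole computation to the rank over $\ftwo$ of one $2\times sm$ matrix; the hypotheses that $\bsk_1,\bsk_2$ are nonzero and distinct enter in exactly one transparent place, via the fact that two vectors over $\ftwo$ are dependent only if one vanishes or they coincide. Your route buys three things: it dispenses entirely with the $\bsk_{12}$ swap and the search for a coordinate where the top bits differ; it proves the stronger conditional statement $\Pr\bigl(\cz(\bsk_1,\cc)\cap\cz(\bsk_2,\cc)\giv\cm\bigr)=2^{-2m}$ for \emph{every} realization of the scrambles; and it makes the role of the hypothesis $\max_j\lceil\kappa_{ij}\rceil\le m$ explicit, since with $E\ge m$ the matrix $M_j$ is $E\times m$ and it is precisely this hypothesis that confines $\vec{k}_{ij}$ to the first $m$ rows of $M_j$, where the unit-lower-triangular $m\times m$ block is invertible so that linear independence of the stacked $\vec{k}$-vectors transfers to the $\bsu$-rows. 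What the paper's argument buys in exchange is uniformity of technique: the same row-conditioning device drives Theorem~\ref{cor:Z1Z2bound} and Theorem~\ref{thm:Z1Z2bound}, and it produces $\Pr\bigl(\cz(\bsk_2,\cc)\bigr)=2^{-m}$ as a useful intermediate fact. All the individual steps you flag as routine are indeed sound: the columns of $(C_1,\dots,C_s)$ are mutually independent blocks since all entries are i.i.d.\ uniform, the per-column matrix $A$ does not depend on the column index, and Proposition~\ref{prop:basic} with $\bsy=\bszero$ gives the per-column probability $2^{-\rank(A)}=2^{-2}$, whence $(2^{-2})^m=2^{-2m}$.
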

\begin{proof}
First, notice that
\begin{align}\label{eq:wecanswap}
\cz(\bsk_1,\cc)\cap\cz(\bsk_2,\cc)=\cz(\bsk_1,\cc)\cap\cz(\bsk_{12},\cc)
=\cz(\bsk_2,\cc)\cap\cz(\bsk_{12},\cc).
\end{align}
Therefore,
by replacing $\bsk_2$ by $\bsk_{12}$ if necessary, we can find $j^*\in1{:}s$ such that $\lceil\kappa_{1j^*}\rceil\neq \lceil\kappa_{2j^*}\rceil$. 
We assume without loss of generality that $\lceil\kappa_{1j^*}\rceil> \lceil\kappa_{2j^*}\rceil$. Then 
\begin{align*}
   &\phantom{=}\,\Pr\bigl(\cz(\bsk_1,\cc)\cap \cz(\bsk_2,\cc)\bigm| \cm, \cc\setminus C_{j^*}(\lceil\kappa_{1j^*}\rceil,:)\bigr)\\
   &= Z(\bsk_2,\cc)\Pr\bigl(\cz(\bsk_1,\cc)\bigm| \cm, \cc\setminus C_{j^*}(\lceil\kappa_{1j^*}\rceil,:)\bigr)\\
   &= Z(\bsk_2,\cc)\Pr\biggl(\,\sum_{r\in\kappa_{1j^*}}M_{j^*}(r,:)^\tran C_{j^*}=-\hspace*{-.2cm}\sum_{j=1,j\neq j^*}^s \vec{k{}}_{1j^*}^\tran M_{j^*} C_{j^*} \Bigm| \cm, \cc\setminus C_{j^*}(\lceil\kappa_{1j^*}\rceil,:)\biggr)\\
   &= 2^{-m}Z(\bsk_2,\cc).
\end{align*}
The first equality follows because $\cz(\bsk_2,\cc)$ does not depend on $C_{j^*}(\lceil\kappa_{1j^*}\rceil,:)$ and the last equality follows because $\sum_{r\in\kappa_{1j^*}}M_{j^*}(r,:)^\tran C_{j^*}=C_{j^*}(\lceil\kappa_{1j^*}\rceil,:)$ 
plus vectors determined by $\cm$ and $\cc\setminus C_{j^*}(\lceil\kappa_{1j^*}\rceil,:)$. 

Taking expectations of the above identity, we get $$\Pr( \cz(\bsk_1,\cc)\cap\cz(\bsk_2,\cc))=2^{-m}\Pr( \cz(\bsk_2,\cc)).$$
We can apply the above argument again to get
$$\Pr\bigl(\cz(\bsk_2,\cc)\mid \cm, \cc\setminus C_{j^*}(\lceil\kappa_{2j^*}\rceil,:)\bigr)=2^{-m}.$$
Therefore $\Pr(\cz(\bsk_2,\cc))=2^{-m}$ and then \eqref{eq:pbothk} follows.
\end{proof}

\begin{theorem}\label{thm:Z1Z2bound}
Let $C_1,\dots, C_s$ be independently drawn from a uniform distribution on $\{0,1\}^{m\times m}$. For any $\epsilon>0$, we can find a threshold $\underline{m}_{\epsilon,s}$ depending on $\epsilon$ and $s$ such that when $m\geq \underline{m}_{\epsilon,s}$ 
    $$\Pr\bigl(\cz(\bsk_1,\cc)\cap \cz(\bsk_2,\cc)\giv \cc\bigr)\leq 2^{-2(1-\epsilon)m}$$
    for all pairs of distinct nonzero $\bsk_1$ and $\bsk_2$ 
    for all $\cc$ in a set $\cc^*$ with probability at least $1-((s-1)!m)^{-1}-m^{3s}2^{-2\epsilon m}$.
\end{theorem}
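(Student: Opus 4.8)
The plan is to bound the conditional probability $P(\bsk_1,\bsk_2;\cc):=\Pr\bigl(\cz(\bsk_1,\cc)\cap\cz(\bsk_2,\cc)\mid\cc\bigr)$ (the probability over the scrambles $\cm$ with $\cc$ frozen) by combining the exact unconditional value of Theorem~\ref{thm:pairs} with Markov's inequality, and then to pass from one pair to all pairs through a union bound over a polynomial family of top-position configurations, disposing of atypical $\cc$ via Theorem~\ref{thm:tofC}. For a single admissible pair (distinct, nonzero, with all $\lceil\kappa_{ij}\rceil\le m$), Theorem~\ref{thm:pairs} gives $\e_{\cc}\bigl[P(\bsk_1,\bsk_2;\cc)\bigr]=2^{-2m}$, so Markov's inequality yields
\begin{align*}
\Pr_{\cc}\bigl(P(\bsk_1,\bsk_2;\cc)>2^{-2(1-\epsilon)m}\bigr)\le\frac{2^{-2m}}{2^{-2(1-\epsilon)m}}=2^{-2\epsilon m}.
\end{align*}
This already exhibits the target exponent $2^{-2\epsilon m}$; the real work is to make one exceptional set $\cc^*$ serve all (infinitely many) pairs at once, which the per-pair estimate cannot do directly.

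First I would show that $P(\bsk_1,\bsk_2;\cc)$ is dominated by a quantity depending on the pair only through its top positions. With $\cc$ fixed, $\cz(\bsk_1,\cc)\cap\cz(\bsk_2,\cc)$ is a system of $2m$ linear equations over $\ftwo$ in the below-diagonal entries of the $M_j$, so $P(\bsk_1,\bsk_2;\cc)\le 2^{-\rho}$ with $\rho$ the rank of its coefficient matrix. By the reduction underlying Theorem~\ref{cor:Z1Z2bound} and Lemma~\ref{lem:Z1Z2indep} --- namely $\vec{k}_{ij}^\tran M_jC_j$ is $C_j(\lceil\kappa_{ij}\rceil,:)$ plus a uniform element of the row space of $C_j(1{:}\lceil\kappa_{ij}\rceil-1,:)$, so that $\vec{k}_{ij}^\tran M_j\eqd M_j(\lceil\kappa_{ij}\rceil,:)$ --- the random directions of this system, and hence $\rho$, are determined by $\cc$ and the top vector $\omega:=(\lceil\bskappa_1\rceil,\lceil\bskappa_2\rceil,\lceil\bskappa_{12}\rceil)$, the third component entering through the swap identity~\eqref{eq:wecanswap}, which lets me replace $\bsk_2$ by $\bsk_{12}$ whenever that better separates the top positions. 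Writing $\rho=\rho(\cc,\omega)$ we then have $P(\bsk_1,\bsk_2;\cc)\le 2^{-\rho(\cc,\omega)}$, and $\omega$ ranges over at most $(m+1)^{3s}$ values, which is the source of the $m^{3s}$ factor.

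I would then set $\cc^*$ to be the $\cc$ with $t(\cc)<s\log_2(m)$ and with $\rho(\cc,\omega)\ge 2(1-\epsilon)m$ for every configuration $\omega$. The first requirement fails with probability at most $((s-1)!m)^{-1}$ by Theorem~\ref{thm:tofC}, giving the first deficit term. For the second, the event $\{\rho(\cc,\omega)<2(1-\epsilon)m\}$ is a rank shortfall of at least $2\epsilon m$ in a system living in an $\ftwo$-space of dimension $2m$; I would bound it for each fixed $\omega$ by $2^{-2\epsilon m}$ directly from the linear-independence estimate~\eqref{eq:lindepbound} (the shortfall forces many random rows of the $C_j$ to be linearly dependent), and a union bound over the $(m+1)^{3s}$ configurations then contributes $m^{3s}2^{-2\epsilon m}$. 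Choosing $m\ge\underline{m}_{\epsilon,s}$ large enough that $t(\cc)<s\log_2(m)$ forces $m-t(\cc)\ge(1-\epsilon)m$, every $\cc\in\cc^*$ satisfies $P(\bsk_1,\bsk_2;\cc)\le 2^{-\rho(\cc,\omega)}\le 2^{-2(1-\epsilon)m}$ for all admissible pairs, as claimed.

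The hard part is the second requirement: showing $\rho(\cc,\omega)\ge 2(1-\epsilon)m$ except with probability $2^{-2\epsilon m}$. This is precisely where random generator matrices beat the worst-case exponent $3/2$ of~\eqref{eq:bound3by2}: a shortfall corresponds to a vector shared by the row spaces of $C^{\lceil\kappa_{1j},\,j\in J_{1\cap2}\rceil}$, $C^{\lceil\kappa_{1j},\,j\notin J_{1\cap2}\rceil}$ and $C^{\lceil\kappa_{2j},\,j\notin J_{1\cap2}\rceil}$, which is unlikely when their total rank is well below $2m$. The delicate bookkeeping is the role of the overlap set $J_{1\cap2}$: when it is small, the rows exclusive to $J_{1\cap2}^c$ already supply rank $\approx 2m$; when it is so large that $\normone{\bskappa_{12}}\le m-t(\cc)$, the net property forces $P=0$ via $\Pr(\cz(\bsk_{12},\cc))=0$; and the genuinely adversarial cases are the intermediate ones, where I expect to combine the ``diagonal'' contributions of the shared leading rows on $J_{1\cap2}$ with the exclusive contributions on $J_{1\cap2}^c$, using the swap identity to keep at least one representation well separated. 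Verifying that these three regimes exhaust all $\omega$ and that the intermediate regime still yields the clean $2^{-2\epsilon m}$ failure probability is the main obstacle.
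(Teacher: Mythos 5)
Your overall architecture is the right one, and it matches the paper's: apply Markov's inequality to the exact value $\Pr\bigl(\cz(\bsk_1,\cc)\cap\cz(\bsk_2,\cc)\bigr)=2^{-2m}$ from Theorem~\ref{thm:pairs} to get a per-pair failure probability $2^{-2\epsilon m}$, discard the $\cc$ with $t(\cc)\geq s\log_2(m)$ via Theorem~\ref{thm:tofC}, and collapse the infinitely many pairs into polynomially many classes before a union bound. But your collapsing step is wrong, and it is the crux. The conditional probability $\Pr\bigl(\cz(\bsk_1,\cc)\cap\cz(\bsk_2,\cc)\giv\cc\bigr)$ is \emph{not} a function of $\cc$ and the top vector $\omega=(\lceil\bskappa_1\rceil,\lceil\bskappa_2\rceil,\lceil\bskappa_{12}\rceil)$. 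Take $s=1$ and compare $\kappa_1=\{3,5\}$, $\kappa_2=\{2,5\}$ with $\kappa_1=\{3,5\}$, $\kappa_2=\{1,5\}$: both give $\omega=(5,5,3)$, yet the first joint event is $M(5,:)C=M(3,:)C=M(2,:)C$, involving the random row $M(2,:)$, while the second is $M(5,:)C=M(3,:)C=C(1,:)$ because $M(1,:)=e_1^\tran$ is deterministic; for generic $C$ these systems have different conditional probabilities, so your class invariant $\rho(\cc,\omega)$ is not well defined. The paper's reduction is finer: writing each event as row sums over the disjoint sets $\kappa_{1j}\cap\kappa_{2j}$, $\kappa_{1j}\setminus\kappa_{2j}$, $\kappa_{2j}\setminus\kappa_{1j}$ and using $\sum_{r\in A}M_j(r,:)\eqd M_j(\lceil A\rceil,:)$ together with independence of row sums over disjoint index sets, the conditional probability depends on the pair only through the per-coordinate triple $\bigl(\lceil\kappa_{1j}\cap\kappa_{2j}\rceil,\lceil\kappa_{1j}\setminus\kappa_{2j}\rceil,\lceil\kappa_{2j}\setminus\kappa_{1j}\rceil\bigr)$, still at most $m^{3s}$ classes. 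Note that $\lceil\kappa_{12,j}\rceil$ is only the maximum of the two difference tops, which is exactly the information your $\omega$ loses.

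Once the classes are defined correctly, the rank-shortfall estimate that you yourself flag as ``the main obstacle'' evaporates: within a class the conditional probability is literally the \emph{same} random variable of $\cc$ for every pair in it, so the single-pair Markov bound $2^{-2\epsilon m}$ \emph{is} the class bound, and no appeal to~\eqref{eq:lindepbound} is needed. It is also doubtful that your sketched substitute would go through: the rows of your coefficient system are structured functions of a small number of rows of the $C_j$, repeated across equations, not fresh independent uniform vectors, so \eqref{eq:lindepbound} does not directly control the shortfall event. Separately, your union bound over $(m+1)^{3s}$ configurations silently assumes all tops are at most $m$; pairs with $\max_j\lceil\kappa_{1j}\rceil\geq m$ or $\max_j\lceil\kappa_{2j}\rceil\geq m$ fall outside Theorem~\ref{thm:pairs} and must be handled separately. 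The paper does this by swapping via~\eqref{eq:wecanswap} so that some $j^*\notin J_{1\cap2}$ has $\lceil\kappa_{1j^*}\rceil\geq m\geq m-t(\cc)$, then invoking the second case of Theorem~\ref{cor:Z1Z2bound} to get the deterministic bound $2^{-2(m-t(\cc)-s)}\leq 2^{-2(1-\epsilon)m}$ on the event $t(\cc)<s\log_2(m)$; your ``small $J_{1\cap2}$'' regime gestures in this direction but is not carried out, so as written your proof covers only part of the pairs the theorem quantifies over.
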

\begin{proof} 

We first prove the bound for pairs $\bsk_1,\bsk_2$ such that $\max_j\lceil\kappa_{1j}\rceil\geq m$ or $\max_j\lceil\kappa_{2j}\rceil\geq m$. By symmetry, we can assume that $\lceil\kappa_{1j^*}\rceil\geq m$ for some $j^*\in1{:}s$, and by replacing $\bsk_2$ with $\bsk_{12}$ if necessary (because of~\eqref{eq:wecanswap}), we can assume that $j^*\notin J_{1\cap 2}$. Theorem~\ref{cor:Z1Z2bound} then shows  that
$$\Pr\bigl(\cz(\bsk_1,\cc)\cap \cz(\bsk_2,\mathcal{C})\giv \cc\bigr)\leq 2^{-2(m-t(\cc)-s)}\leq 2^{-2(1-\epsilon)m}$$
for large enough $m$ when $t(\cc)< s\log_2(m)$. From Theorem~\ref{thm:tofC},
\begin{align}\label{eq:badevent1}\Pr( t(\cc)\ge s \log_2(m)) \le ((s-1)!m)^{-1}.
\end{align}

We proceed to bound the remaining pairs. First notice that $\cz(\bsk_1,\cc)\cap \cz(\bsk_2,\cc)$ can be rewritten as these two equalities:
\begin{align*}\sum_{j=1}^s \sum_{r\in \kappa_{1j}\cap \kappa_{2j}}M_j(r,:) C_j+\sum_{r\in \kappa_{1j}\setminus \kappa_{2j}}M_j(r,:)C_j&=\bszero,\quad\text{and}\\
 \sum_{j=1}^s \sum_{r\in \kappa_{1j}\cap \kappa_{2j}}M_j(r,:) C_j+\sum_{r\in \kappa_{2j}\setminus \kappa_{1j}}M_j(r,:)C_j&=\bszero.
\end{align*}
Now $\sum_{r\in A}M_j(r,:)\eqd M_j(\lceil A\rceil,:)$ and if $A\cap B=\emptyset$ then $\sum_{r\in A}M_j(r,:)$ is independent of $\sum_{r\in B}M_j(r,:)$. Therefore 
$\cz(\bsk_1,\cc)\cap \cz(\bsk_2,\cc)$ is equivalent to
\begin{align*} \sum_{j=1}^s M_j\bigl(\lceil\kappa_{1j}\cap \kappa_{2j}\rceil,:\bigr) C_j+M_j\bigl(\lceil\kappa_{1j}\setminus \kappa_{2j}\rceil,:\bigr)C_j&=\bszero,\quad\text{and}\\
\sum_{j=1}^s M_j\bigl(\lceil\kappa_{1j}\cap \kappa_{2j}\rceil,:\bigr) C_j+M_j\bigl(\lceil\kappa_{2j}\setminus \kappa_{1j}\rceil,:\bigr)C_j&=\bszero.
\end{align*}
This implies that $\Pr\bigl(\cz(\bsk_1,\cc)\cap \cz(\bsk_2,\cc)\giv \cc\bigr)$ only depends on $\bsk_1,\bsk_2$ through rows $\lceil \kappa_{1j}\cap \kappa_{2j}\rceil$, $\lceil \kappa_{1j}\setminus\kappa_{2j}\rceil$ and $\lceil\kappa_{2j}\setminus\kappa_{1j}\rceil$ of $M_j$ for $j=1{:}s$. Since the remaining pairs satisfy $\max_j\lceil\kappa_{1j}\rceil< m$ and $\max_j\lceil\kappa_{2j}\rceil< m$, each of $\lceil \kappa_{1j}\cap \kappa_{2j}\rceil$, $\lceil \kappa_{1j}\setminus\kappa_{2j}\rceil$ and $\lceil \kappa_{2j}\setminus\kappa_{1j}\rceil$ can only take values from $0$ to $m-1$, constituting at most $m^{3s}$ combinations. For each combination, we can apply Markov's inequality and get
\begin{align*}
    &\phantom{\leq}\,\Pr\Bigl(\Pr\bigl(\cz(\bsk_1,\cc)\cap \cz(\bsk_2,\cc)\giv \mathcal{C}\bigr)\geq
    2^{-2(1-\epsilon)m}\Bigr)\\
    &\leq  2^{2(1-\epsilon)m}\Pr\bigl(\cz(\bsk_1,\cc)\cap \cz(\bsk_2,\cc)\bigr)\\
    &=2^{-2\epsilon m}.
\end{align*}
After taking a union bound, we see that with probability at least $1-m^{3s}2^{-2\epsilon m}$ the desired bound holds for all the remaining pairs. 
The conclusion follows once we combine the failure probabilities from the preceding two cases. 
\end{proof}

\begin{remark}
    Although our focus is not on the dimensionality analysis, we note the threshold $\underline{m}_{\epsilon,s}$ grows super-linearly in $s/\epsilon$ because we need $m^{3s}2^{-2\epsilon m}<1$ for $m\geq \underline{m}_{\epsilon,s}$ in order for the statement of Theorem~\ref{thm:Z1Z2bound} to make sense. This translates to a super-exponential growth for the sample size $n=2^m$. One can improve this result by analyzing each coordinate projection separately. For each $u\subseteq 1{:}s$, the projection of $\bsx_i, i\in \ints_n$ onto coordinates $j\in u$ is a digital net with generating matrices $C_j,j\in u$. For each projection, we can set a different $\epsilon_u$ and replace $t(\mathcal{C})$ in our previous analysis by the projection-specific $t$-value $t^*_u(\mathcal{C})$ defined in \cite{nonzerogain}. The benefit is that when $f$ has a low effective dimension as defined in \cite{cafmowen}, we can allow large $\epsilon_u$ for high-dimensional projections because their corresponding ANOVA components $f_u$ have a negligible influence on our QMC estimate. A full analysis is beyond the scope of this paper and left for future research. 
\end{remark}

Theorem~\ref{thm:Z1Z2bound} shows that when $m$ is large then for most realizations of $\cc$, $\Pr\bigl(\cz(\bsk_1,\cc)\cap \cz(\bsk_2,\cc)\giv \cc\bigr)$ has a comparable bound to that of the $s\leq 2$ case. From now on, we assume that we have generated our $\cc$, either by a pre-chosen design or by randomization, and continue our analysis on $\e[(\hat{\mu}_{\infty}-\mu)^3]$. We will again suppress the dependence of $t$ and $\cz(\bsk)$ on $\cc$.

Next we provide a technical lemma that we need in the proof of
our main result.  There we will have to sum a triple product of the lead factor in the bound from Theorem~\ref{thm:Walshcoefficientbound} over a collection of Walsh coefficient indices.  This next result is the core of the proof of our main theorem. Figure~\ref{fig:placement} helps to visualize the arguments.

\begin{figure}
\centering
\begin{tabular}{|l|ccccccccccccccc|}
\multicolumn{8}{r}{$\,\ell_1$}\\
\hline
$\kappa_1$ &$\circ$&$\circ$&$\cdots$&$\circ$&$\circ$&$\circ$&$\bullet$&&&&&&&&\\
\hline
$\kappa_{12}$&$\circ$&$\circ$&$\cdots$&$\circ$&$\circ$&$\circ$&$\bullet$&$\circ$&$\circ$&$\circ$&$\circ$&$\cdots$&$\circ$&$\circ$&$\bullet$\\
\hline
$\kappa_2$&$\circ$&$\circ$&$\cdots$&$\circ$&$\circ$&$\circ$&&$\circ$&$\circ$&$\circ$&$\circ$&$\cdots$&$\circ$&$\circ$&$\bullet$\\
\hline
\multicolumn{16}{r}{$\ell_2$}\\
\end{tabular}
\caption{\label{fig:placement}
This is a sketch of the sets $\kappa_1$, $\kappa_2$, $\kappa_{12}$, when all are non-empty.
Solid circles mark indices that must be in those sets including $\ell_1=\lceil\kappa_1\rceil$ and $\ell_2=\lceil\kappa_2\rceil=\lceil\kappa_{12}\rceil$. Without loss of generality $\ell_1\in\kappa_{12}\setminus\kappa_2$.
Open circles in $\kappa_1$ and $\kappa_2$ indicate possible locations of $\ell_3=\lceil\kappa_1\rceil_{(2)}$ and $\ell_4=\lceil\kappa_2\rceil_{(2)}$, respectively.
Every index must be in exactly $0$ or $2$ of sets $\kappa_1$, $\kappa_2$, $\kappa_{12}$. 
  }
\end{figure}

\begin{lemma}\label{lem:boundT}
For $\ell_1,\ell_2,\ell_{12}\in\natu_0$, let  $K(\ell_1,\ell_2,\ell_{12})$ be the set of pairs $(\kappa_1,\kappa_2)\in\caln^2$ such that $\lceil\kappa_1\rceil=\ell_1$ and $\lceil\kappa_2\rceil=\ell_2$ and $\lceil\kappa_{12}\rceil=\ell_{12}$. 
Let $$T(\ell_{1},\ell_{2},\ell_{12})=\sum_{(\kappa_{1},\kappa_{2})\in K(\ell_{1},\ell_{2},\ell_{12})}2^{-\normtwo{\kappa_{1}}-\normtwo{\kappa_{2}}-\normtwo{\kappa_{12}}}.$$
Then
\begin{equation}\label{eq:boundT}
T(\ell_{1},\ell_{2},\ell_{12})
<2^{-\ell_1-\ell_2-\ell_{12}+2}.
\end{equation}
\end{lemma}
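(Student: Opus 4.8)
The plan is to strip off the contribution of the largest indices and reduce the claim to a bound involving only the \emph{second}-largest indices. Since $\normtwo{\kappa}=\lceil\kappa\rceil+\second{\kappa}$ and the values $\lceil\kappa_1\rceil=\ell_1$, $\lceil\kappa_2\rceil=\ell_2$, $\lceil\kappa_{12}\rceil=\ell_{12}$ are held fixed on $K(\ell_1,\ell_2,\ell_{12})$, every summand factors as $2^{-\ell_1-\ell_2-\ell_{12}}\cdot 2^{-\second{\kappa_1}-\second{\kappa_2}-\second{\kappa_{12}}}$. Hence it suffices to prove
$$S:=\sum_{(\kappa_1,\kappa_2)\in K(\ell_1,\ell_2,\ell_{12})}2^{-\second{\kappa_1}-\second{\kappa_2}-\second{\kappa_{12}}}<4.$$
The organizing combinatorial fact, visible in Figure~\ref{fig:placement}, is that $\kappa_{12}=\kappa_1\triangle\kappa_2$, so every index lies in exactly $0$ or $2$ of the three sets $\kappa_1,\kappa_2,\kappa_{12}$. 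This partitions $\natu$ into four index types and makes a pair $(\kappa_1,\kappa_2)$ equivalent to a type assignment.

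Next I would exploit symmetry. Because any two of $\kappa_1,\kappa_2,\kappa_{12}$ determine the third by $\triangle$, and because both the summand and the defining conditions of $K$ are symmetric in the three sets, $T$ is symmetric in its three arguments. The $0$-or-$2$ property forces the globally largest index to lie in exactly two of the sets, so two of $\ell_1,\ell_2,\ell_{12}$ equal the maximum $L$ and the third is strictly smaller; otherwise $K$ is empty (the all-zero triple being a trivial exception handled directly). Using the symmetry of $T$, I would assume the canonical ordering $\ell_1<L=\ell_2=\ell_{12}$, i.e.\ the top index $L$ has type $\kappa_2\cap\kappa_{12}$. A residual involution $(\kappa_1,\kappa_2)\mapsto(\kappa_1,\kappa_{12})$ fixes $K(\ell_1,L,L)$, preserves the summand, and swaps the two possibilities for the index $\ell_1=\lceil\kappa_1\rceil$ (type $\kappa_1\cap\kappa_{12}$ versus type $\kappa_1\cap\kappa_2$), so I may compute $S$ over configurations with $\ell_1\in\kappa_{12}\setminus\kappa_2$ and double.

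The key structural observation that unlocks the sum is that $\second{\kappa_{12}}$ is controlled entirely by the indices of $\kappa_2$ lying strictly between $\ell_1$ and $L$. Indeed $\ell_1\in\kappa_{12}$ forces $\second{\kappa_{12}}\ge\ell_1$, which dominates all indices below $\ell_1$; and no index of $\kappa_1$ exceeds $\ell_1$, so the only indices of $\kappa_{12}$ in $(\ell_1,L)$ come from $\kappa_2$. I would therefore split into the case where $\kappa_2$ has no index in $(\ell_1,L)$, giving $\second{\kappa_{12}}=\ell_1$, and the case where it does, giving $\second{\kappa_{12}}=\second{\kappa_2}$ equal to that largest index. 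In each case the configuration decouples into an assignment on $\{1,\dots,\ell_1-1\}$ governing $\second{\kappa_1}$ (and, in the first case, $\second{\kappa_2}$) and an independent tail on $(\ell_1,L)$, so $S$ becomes a product of finite geometric sums in the free second-largest indices.

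Carrying out these geometric sums yields a bound on $S$ that is uniform in $\ell_1$ and $L$ and comfortably below the target $4$, completing the proof. The main obstacle is the coupling of the three sets through $\kappa_{12}$: a priori $\second{\kappa_{12}}$ could depend on the low indices of both $\kappa_1$ and $\kappa_2$, and it is only the observation $\second{\kappa_{12}}\ge\ell_1$ together with $\max\kappa_1=\ell_1$ that collapses this dependence and lets the sum factor. Getting the bookkeeping of the index types right, so that each configuration is counted once with the correct weight, is where the care is needed; the final summation is then routine.
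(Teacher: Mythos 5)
Your proposal is correct and follows essentially the same route as the paper's proof: factor out $2^{-\ell_1-\ell_2-\ell_{12}}$, use the 0-or-2 membership structure and the symmetry of $T$ to reduce to $\ell_1<\ell_2=\ell_{12}$, double via the swap $\kappa_2\leftrightarrow\kappa_{12}$ so that $\ell_1\in\kappa_{12}\setminus\kappa_2$, observe $\second{\kappa_{12}}=\max(\ell_1,\second{\kappa_2})$, and finish with geometric sums over the second-largest indices. Two small caveats: the degenerate case is not only the all-zero triple---whenever some argument is $0$ (say $\ell_1=0$, so $\kappa_1=\emptyset$ and $\kappa_2=\kappa_{12}$) your involution/doubling step is vacuous and a separate direct computation is required, which is exactly the paper's first case---and the final estimate is not ``comfortably below'' $4$, since the resulting bound $\ell_1(\ell_1+2)2^{-\ell_1+1}$ equals $4$ at $\ell_1=2$, so the strict inequality must be carried through from an earlier strict step as the paper does.
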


\begin{proof}
Recalling that $\second{\kappa}$ was defined as the second largest element of $\kappa$, defaulting to zero if $|\kappa|<2$.  First we consider the case where any of $\ell_1$, $\ell_2$, $\ell_{12}$ equals zero.  For that case we can take $\ell_1=0$ without loss of generality. We can assume that $\kappa_2=\kappa_{12}$, for otherwise $T=0$. Now $\ell=\second{\kappa_2}$ is an integer between $0$ and $\max(\ell_2-1,0)$ inclusive.  For any nonzero $\ell$ there are $2^{\ell-1}$ sets $\kappa_2$ with $\lceil\kappa_2\rceil=\ell_2$ and $\second{\kappa_2}=\ell$.  Then
\begin{align*}
T(\ell_1,\ell_2,\ell_{12})
&= \sum_{\kappa_2:\lceil\kappa_2\rceil=\ell_2}2^{-2\ell_2-2\second{\kappa_2}}
=2^{-2\ell_2}\sum_{\ell=0}^{\max(\ell_2-1,0)}2^{-2\ell}
2^{\max(0,\ell-1)}\\
&<2^{-2\ell_2}\bigl[1+\sum_{\ell=1}^\infty2^{-\ell+1}\bigr]
< 2^{-2\ell_2+2}=2^{-\ell_1-\ell_2-\ell_{12}+2}.
\end{align*}

With $\min(\ell_1,\ell_2,\ell_{12})>0$ the relation $\vec{k{}}_{12}=\vec{k{}}_{1}+\vec{k{}}_{2}$ now implies that two of $\ell_{1},\ell_{2},\ell_{12}$ are equal while the remaining one is smaller than the other two. 
By symmetry, we can permute the inputs $\ell_{1},\ell_{2},\ell_{12}$ without changing the value of $T(\ell_{1},\ell_{2},\ell_{12})$.  Therefore we assume without loss of generality that $1\le\ell_{1}<\ell_{2}=\ell_{12}$. Furthermore, one of $\kappa_2,\kappa_{12}$ must contain $\ell_1$ and then the other one cannot.  By switching $\kappa_2$ with $\kappa_{12}$ when $\ell_1\in \kappa_2$, we see that
\begin{align*}
T(\ell_{1},\ell_{2},\ell_{12})
&=2\sum_{\kappa_{1}:\lceil\kappa_1\rceil=\ell_1}\sum_{\kappa_{2}:\lceil\kappa_2\rceil=\ell_2, \ell_1\notin \kappa_2}2^{-\normtwo{\kappa_{1}}-\normtwo{\kappa_{2}}-\normtwo{\kappa_{12}}}\\
&=2^{-\ell_1-2\ell_2+1}\sum_{\kappa_{1}:\lceil\kappa_1\rceil=\ell_1}\sum_{\kappa_{2}:\lceil\kappa_2\rceil=\ell_2, \ell_1\notin \kappa_2}2^{-\second{\kappa_{1}}-\second{\kappa_{2}}-\second{\kappa_{12}}}.
\end{align*}

 Suppose that $\second{\kappa_1}=\ell_3<\ell_1$  and $\second{\kappa_2}=\ell_4<\ell_2$. Then $\second{\kappa_{12}}=\max(\ell_1,\ell_4)$. 
We need to get an upper bound for the number of pairs $(\kappa_1,\kappa_2)$ that satisfy $\lceil \kappa_1\rceil=\ell_1$, $\lceil\kappa_2\rceil_1=\ell_2$, $\second{\kappa_1}=\ell_3$, $\second{\kappa_2}=\ell_4$ and $\ell_1\not\in\kappa_2$. An index $\ell$ with $1\le \ell<\min(\ell_3,\ell_4)$ can be in both $\kappa_1$ and $\kappa_2$, or in neither of them, or in exactly one of them (plus $\kappa_{12}$). That provides $4^{\max(\min(\ell_3,\ell_4)-1,0)}$ possibilities. Next consider $\ell$ with $\min(\ell_3,\ell_4)<\ell<\max(\ell_3,\ell_4)$.    If $\ell_4<\ell<\ell_3$ then $\ell$ can belong to $\kappa_1$ (and $\kappa_{12})$ or not. If $\ell_3<\ell<\ell_4$ (but $\ell\ne\ell_1$) then $\ell$ can belong to $\kappa_2$ (and $\kappa_{12})$ or not. These combinations multiply the number of choices by at most $2^{\max(\ell_3,\ell_4)-\min(\ell_3,\ell_4)-1}$. 
If $\ell=\min(\ell_3,\ell_4)>0$, then there are two choices if $\ell_3\ne\ell_4$ and just one if $\ell_3=\ell_4$. If $\ell=\max(\ell_3,\ell_4)$, there is only one choice whether or not $\ell_3=\ell_4$. Finally, if $\ell>\max(\ell_3,\ell_4)$, then there is just one choice for $\ell$: if $\ell=\ell_2$, then $\ell\in\kappa_2$ but not $\kappa_1$, while if $\ell\ne \ell_2$, then $\ell$ is not in either of those sets.

Therefore in this setting with $\ell_1>0$
\begin{align}\label{eqn:Slll}
    &\phe\ T(\ell_{1},\ell_{2},\ell_{12})\nonumber\\
    &\leq 2^{-\ell_1-2\ell_2+1}\sum_{\ell_3=0}^{\ell_1-1}\sum_{\ell_4=0}^{\ell_2-1}2^{-\ell_3-\ell_4-\max(\ell_1,\ell_4)}4^{\max(\min(\ell_3,\ell_4)-1,0)}2^{\max(\ell_3,\ell_4)-\min(\ell_3,\ell_4)}\nonumber\\
    &= 2^{-\ell_1-2\ell_2+1}\sum_{\ell_3=0}^{\ell_1-1}\sum_{\ell_4=0}^{\ell_2-1}2^{-\max(\ell_1,\ell_4)}4^{-\one\{\min(\ell_3,\ell_4)\neq 0\}}\nonumber\\
    &<2^{-\ell_1-2\ell_2+1}\sum_{\ell_3=0}^{\ell_1-1}\biggl(\,\sum_{\ell_4=0}^{\ell_1}2^{-\ell_1}+\sum_{\ell_4=\ell_1+1}^\infty2^{-\ell_4}\biggr)\nonumber\\
    &=\ell_1(\ell_1+2)2^{-2\ell_1-2\ell_2+1}.
\end{align}
Recall that $\ell_1<\ell_2=\ell_{12}$. Because $\sup_{\ell_1\geq 1}\ell_1(\ell_1+2)2^{-\ell_1}=2$, we conclude that
$T(\ell_{1},\ell_{2},\ell_{12})< 2^{-\ell_1-\ell_2-\ell_{12}+2}$.
\end{proof}

\begin{theorem}\label{thm:main}
Let $f:[0,1]^s\to\real$ have continuous mixed partial derivatives up to order $2$ in each variable $x_j$ for $j\in1{:}s$. Then
$$\e((\hat\mu-\mu)^3)=O(n^{-9/2+\epsilon})$$
for any $\epsilon>0$.
If also $s\leq 2$ or the generator matrices $\mathcal{C}$ belongs to the set $\mathcal{C}^*$ specified in Theorem~\ref{thm:Z1Z2bound}, then
$$\e((\hat\mu-\mu)^3)=O(n^{-5+\epsilon}).$$
\end{theorem}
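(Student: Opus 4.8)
The plan is to combine the exact expansion of $\e(\hat\mu^3)$ in~\eqref{eq:numer} with the Walsh-coefficient decay of Theorem~\ref{thm:Walshcoefficientbound} and the pair-collision bounds of Theorems~\ref{cor:Z1Z2bound} and~\ref{thm:Z1Z2bound}, reducing everything to a single combinatorial sum over the leading indices of $\bskappa_1,\bskappa_2,\bskappa_{12}$ that is controlled by Lemma~\ref{lem:boundT}. First I would take absolute values in~\eqref{eq:numer} and insert the simplified Walsh bound $|\hat f(\bsk)|\le c\,2^{-\Vert\normtwo{\bskappa}\Vert_1}$, where $c<\infty$ collects the supremum of the relevant mixed partial derivative (finite by the smoothness hypothesis). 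This gives
$$|\e(\hat\mu^3)|\le c^3\sum_{\bsk_1,\bsk_2\in\natu_*^s}2^{-\Vert\normtwo{\bskappa_1}\Vert_1-\Vert\normtwo{\bskappa_2}\Vert_1-\Vert\normtwo{\bskappa_{12}}\Vert_1}\,\Pr\bigl(\cz(\bsk_1)\cap\cz(\bsk_2)\bigr).$$
Terms with $\bsk_1=\bsk_2$ drop out since then $\bsk_{12}=\bszero$ and $\hat f(\bszero)=\mu=0$. A term survives only when $\Pr(\cz(\bsk_1)\cap\cz(\bsk_2))>0$; since $\cz(\bsk_1)\cap\cz(\bsk_2)\subseteq\cz(\bsk_{12})$, the $(t,m,s)$-net property then forces $\normone{\bskappa_1},\normone{\bskappa_2},\normone{\bskappa_{12}}\ge m-t$ simultaneously, and I would restrict the sum to this region before bounding the probability factor.

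Next I would factor out the uniform collision bound. In the general case this is $\Pr(\cz(\bsk_1)\cap\cz(\bsk_2))\le 2^{-(3/2)(m-t-s)}$ from~\eqref{eq:bound3by2}; when $s\le2$ it is $2^{-2(m-t-s)}$ from~\eqref{eq:Z1Zbound}; and when $\cc\in\cc^*$ it is the conditional bound $2^{-2(1-\epsilon)m}$ from Theorem~\ref{thm:Z1Z2bound}. What remains is the sum of $2^{-\Vert\normtwo{\bskappa_1}\Vert_1-\Vert\normtwo{\bskappa_2}\Vert_1-\Vert\normtwo{\bskappa_{12}}\Vert_1}$ over all pairs with the three constraints $\normone{\cdot}\ge m-t$. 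Because $\Vert\normtwo{\bskappa}\Vert_1=\sum_j\normtwo{\kappa_j}$ factors over coordinates, I would group the sum by the leading-index triple $(\ell_{1j},\ell_{2j},\ell_{12,j})=(\lceil\kappa_{1j}\rceil,\lceil\kappa_{2j}\rceil,\lceil\kappa_{12,j}\rceil)$ in each coordinate and apply Lemma~\ref{lem:boundT} coordinatewise, bounding the inner sum over the actual sets by $2^{-\ell_{1j}-\ell_{2j}-\ell_{12,j}+2}$. This collapses the whole expression to $2^{2s}\,\Sigma$, where $\Sigma=\sum 2^{-L_1-L_2-L_{12}}$ with $L_1=\sum_j\ell_{1j}$, $L_2=\sum_j\ell_{2j}$, $L_{12}=\sum_j\ell_{12,j}$, summed over leading-index configurations subject to $L_1,L_2,L_{12}\ge m-t$ and the XOR-consistency that, in each coordinate, makes two of $\ell_{1j},\ell_{2j},\ell_{12,j}$ equal and no smaller than the third.

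The heart of the proof — and the step I expect to be the main obstacle — is showing $\Sigma=O(\mathrm{poly}(m)\,2^{-3(m-t)})$; the difficulty is that the three lower-bound constraints couple the coordinates and do not factor. I would handle this by Chernoff-style exponential tilting: for $\theta\in(0,1)$ I bound $\one\{L_i\ge m-t\}\le 2^{\theta(L_i-(m-t))}$ for each $i\in\{1,2,12\}$, so that $\Sigma\le 2^{-3\theta(m-t)}\,g_\theta^{\,s}$ with per-coordinate factor $g_\theta=\sum 2^{-(1-\theta)(\ell_{1j}+\ell_{2j}+\ell_{12,j})}$. Each $g_\theta$ is finite for $\theta<1$ (the XOR structure makes the coordinate weight $2^{-(1-\theta)(2a+b)}$ with $a\ge b\ge0$, so the decay rates $2(1-\theta)$ and $1-\theta$ are positive) and satisfies $g_\theta=O((1-\theta)^{-2})$ as $\theta\uparrow1$. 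Choosing $\theta=1-1/(m-t)$ makes $2^{-3\theta(m-t)}=2^{-3(m-t)+3}$ and $g_\theta^{\,s}=O((m-t)^{2s})$, yielding $\Sigma=O(\mathrm{poly}(m)\,2^{-3(m-t)})$.

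Finally I would assemble the pieces. In the general case,
$$|\e(\hat\mu^3)|\le c^3\,2^{-(3/2)(m-t-s)}\,2^{2s}\,\Sigma=O\bigl(\mathrm{poly}(m)\,2^{-(9/2)(m-t)}\bigr),$$
and since the constructions of interest have $t=O(\log m)$ the factors $2^{(9/2)t}$ and $\mathrm{poly}(m)=\mathrm{poly}(\log n)$ are $O(n^\epsilon)$, giving $\e(\hat\mu^3)=O(n^{-9/2+\epsilon})$. Replacing the collision bound by $2^{-2(m-t-s)}$ (valid when $s\le2$) or by $2^{-2(1-\epsilon)m}$ (valid for $\cc\in\cc^*$, where moreover $t(\cc)=O(\log m)$) turns the exponent $-9/2$ into $-5$, so that after absorbing the small tilt and $\mathrm{poly}(m)$ factors into $n^\epsilon$ we obtain $\e(\hat\mu^3)=O(n^{-5+\epsilon})$ in both improved settings.
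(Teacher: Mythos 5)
Your proposal is correct and follows the paper's proof almost step for step: the reduction of $\e(\hat\mu^3)$ via \eqref{eq:numer}, the insertion of the simplified Walsh bound $2^{-\Vert\normtwo{\bskappa}\Vert_1}$, the case-split collision bound $\overline{P}\in\{2^{-2(m-t-s)},\,2^{-2(1-\epsilon)m},\,2^{-(3/2)(m-t-s)}\}$, the restriction to $\normone{\bskappa_\ell}>m-t$ for $\ell\in\{1,2,12\}$, and the coordinatewise application of Lemma~\ref{lem:boundT} are all exactly the paper's steps. The one place you diverge is the final constrained sum over leading-index vectors. You treat the coupling induced by the three constraints $\norm{\bsell_i}>m-t$ together with the per-coordinate XOR-consistency as the main obstacle and resolve it by Chernoff-style tilting, $\one\{L_i\ge m-t\}\le 2^{\theta(L_i-(m-t))}$ with $\theta=1-1/(m-t)$, which factors the tilted sum over coordinates and yields $O(\mathrm{poly}(m)\,2^{-3(m-t)})$; your estimate $g_\theta=O((1-\theta)^{-2})$ checks out. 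The paper's route is simpler: after Lemma~\ref{lem:boundT} it discards the XOR-consistency constraint entirely, so the triple sum over $(\bsell_1,\bsell_2,\bsell_{12})$ decouples into the cube $\bigl(\sum_{\bsell\in\natu_0^s}2^{-\norm{\bsell}}\one\{\norm{\bsell}>m-t\}\bigr)^3$, and each factor is evaluated directly by counting, $\sum_{N=m-t+1}^\infty\binom{N+s-1}{s-1}2^{-N}=O(m^{s-1}2^{-m+t})$, giving $O(\overline{P}\,m^{3s-3}2^{-3(m-t)})$ with no tilting parameter to tune. Both give the same rate up to $\mathrm{poly}(m)=n^{o(1)}$ factors; your tilting argument is heavier machinery than needed here (the coupling you worried about can simply be dropped, since summing the three index vectors independently only increases the bound), but it retains the XOR structure and would adapt to situations where the decoupled sum is too lossy. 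Your closing remarks on absorbing $2^{O(t)}$ via $t=O(\log m)$ (automatic on $\cc^*$ since its construction includes the event $t(\cc)<s\log_2 m$) match the paper's implicit treatment.
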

\begin{proof}
From Theorem~\ref{cor:Z1Z2bound} and Theorem~\ref{thm:Z1Z2bound}, we introduce the bound 
\begin{equation}\label{eqn:Pr}
    \overline{P}
=\begin{cases}
2^{-2(m-t-s)}, & s\leq 2,\\
2^{-2(1-\epsilon)m}, & s\geq 3 \text{ and $\cc\in\cc^*$ of Theorem~\ref{thm:Z1Z2bound}},\\
2^{-(3/2)(m-t-s)}, &\text{else.}
\end{cases}
\end{equation}
By the $(t,m,s)$-net property, we find $\Pr\bigl(\cz(\bsk_1)\cap \cz(\bsk_2)\bigr)=0$ if $\normone{\bskappa_\ell}\leq m-t$ for any of $\ell\in\{1,2,12\}$. 
Applying the above bounds to~\eqref{eq:numer}, we see 
that $\e[(\hat{\mu}_{\infty}-\mu)^3]$ is at most
\begin{align}\label{eqn:3rdmoment}
\overline{P}\sum_{\bsk_1\in \natu_*^s}\sum_{\bsk_2\in \natu_*^s}\bigl|\hat{f}(\bsk_1)\hat{f}(\bsk_2)\hat{f}(\bsk_{12})\bigr|\prod_{\ell\in\{1,2,12\}}\one\{\normone{\bskappa_\ell}>m-t\}.
\end{align}

Now the Walsh coefficient bound from Theorem~\ref{thm:Walshcoefficientbound} gives
\begin{align*}
    |\hat{f}(\bsk_1)\hat{f}(\bsk_2)\hat{f}(\bsk_{12})|&\leq A  2^{-\Vert\normtwo{\bskappa_1}\Vert_1-\Vert\normtwo{\bskappa_2}\Vert_1-\Vert\normtwo{\bskappa_{12}}\Vert_1}\\
    &=A \prod_{j=1}^s 2^{-\normtwo{\bskappa_{1j}}-\normtwo{\bskappa_{2j}}-\normtwo{\bskappa_{12j}}}
\end{align*}
where 
$$A=\biggl(\,\sup_{\bskappa:|\bskappa|\in \{0,1,2\}^s}\sup_{\bsx_{\supp(\bsk)}\in[0,1)^{|\supp(\bsk)|}}\Big|\int_{[0,1)^{s-|\supp(\bsk)|}}f^{|\bskappa|\wedge2}(\bsx)\rd\bsx_{-\supp(\bsk)}\Big|\biggr)^3.$$

For $\bsell_1,\bsell_2,\bsell_{12}\in\natu_0^s$ let $\ck(\bsell_1,\bsell_2,\bsell_{12})$ be the set of pairs $(\bsk_1,\bsk_2)$ such that $\lceil\bskappa_1\rceil=\bsell_1$, $\lceil\bskappa_2\rceil=\bsell_2$ and $\lceil\bskappa_{12}\rceil=\bsell_{12}$. Then for any chosen $\bsell_1, \bsell_2, \bsell_{12}$ 
\begin{align}\label{eqn:fff}
&\phantom{\leq\,}    \sum_{(\bsk_1,\bsk_2)\in \ck(\bsell_1,\bsell_2,\bsell_{12})}|\hat{f}(\bsk_1)\hat{f}(\bsk_2)\hat{f}(\bsk_{12})|\nonumber\\
    &\leq\sum_{(\bsk_1,\bsk_2)\in \ck(\bsell_1,\bsell_2,\bsell_{12})}A \prod_{j=1}^s 2^{-\normtwo{\kappa_{1j}}-\normtwo{\kappa_{2j}}-\normtwo{\kappa_{12j}}}\nonumber\\
    &=A\prod_{j=1}^s \sum_{(\kappa_{1j},\kappa_{2j})\in K(\ell_{1j},\ell_{2j},\ell_{12j})}2^{-\normtwo{\kappa_{1j}}-\normtwo{\kappa_{2j}}-\normtwo{\kappa_{12j}}}
\end{align}
where $K(\ell_1,\ell_2,\ell_{12})$ is the set of pairs $(k_1,k_2)$ such that $\lceil \kappa_1\rceil=k_1$, $\lceil \kappa_2\rceil=k_2$ and $\lceil\kappa_{12}\rceil=k_{12}$.

Applying Lemma~\ref{lem:boundT} to
equation~\eqref{eqn:fff}, we get
\begin{align*}
    \sum_{(\bsk_1,\bsk_2)\in \ck(\bsell_1,\bsell_2,\bsell_{12})}|\hat{f}(\bsk_1)\hat{f}(\bsk_2)\hat{f}(\bsk_{12})|
    &\leq A\prod_{j=1}^s2^{-\ell_{1j}-\ell_{2j}-\ell_{12j}+2}\\
    &= A 2^{-\norm{\bsell_1}-\norm{\bsell_2}-\norm{\bsell_{12}}+2s}.
\end{align*}
Finally, we put the above bound into equation~\eqref{eqn:3rdmoment} and get
\begin{align}\label{eqn:cubicbound}
    \e[(\hat{\mu}_{\infty}-\mu)^3]&\leq \overline{P}\sum_{\bsell_1\in \natu_0^s}\sum_{\bsell_2\in \natu_0^s}\sum_{\bsell_{12}\in \natu_0^s}A 2^{-\norm{\bsell_1}-\norm{\bsell_2}-\norm{\bsell_{12}}+2s}\one\{\norm{\bsell_i}>m-t,i=1,2,12\}\nonumber\\
    &=\overline{P} A2^{2s}\Biggl(\sum_{\bsell\in \natu_0^s}2^{-\norm{\bsell}}\one\{\norm{\bsell}>m-t\}\Biggr)^3.
\end{align}

Because there are ${N+s-1\choose s-1}$ vectors $\bsell\in\natu_0^s$ with $\norm{\bsell}=N$,
\begin{align}\label{eqn:suml}
\begin{split}
    \sum_{\bsell\in \natu_0^s}2^{-\norm{\bsell}}\one\{\norm{\bsell}>m-t\}&=\sum_{N=m-t+1}^\infty {N+s-1\choose s-1}2^{-N}\\
    &=O(m^{s-1} 2^{-m+t})
\end{split}
\end{align}
where $s$ is deemed as a constant in the big-O notation. Therefore
$$\e[(\hat{\mu}_{\infty}-\mu)^3]=O(\overline{P}\, m^{3s-3} 2^{-3(m-t)}).$$
In view of equation~\eqref{eqn:Pr}, $\e[(\hat{\mu}_{\infty}-\mu)^3]=O(n^{-5+\epsilon})$ for any $\epsilon>0$ if $s\leq 2$ or  $\cc\in \cc^*$ of Theorem~\ref{thm:Z1Z2bound}, and is $O(n^{-9/2+\epsilon})$ in general. 
\end{proof}

Under our smoothness conditions on $f$, the RQMC variance is $O(n^{-3+\epsilon})$
because that holds for the scramble in \cite{rtms}
by \cite{smoovar,localanti}, and because the scramble of \cite{mato:1998:2}
has the same variance \cite{YueHic02a}.
When the RQMC variance is $\Omega(n^{-3})$, then RQMC has $\gamma = O(n^{-1/2+\epsilon})$ in the former case
and $O(n^\epsilon)$ otherwise.

\begin{remark}
It is possible to improve the above bound in the special cases with $s=1$ or $s=2$.
When $s=1$, equations~\eqref{eqn:fff} and \eqref{eqn:Slll} give
\begin{align*}
&\phantom{\leq\,} \sum_{(\bsk_1,\bsk_2)\in \ck(\bsell_1,\bsell_2,\bsell_{12})}|\hat{f}(\bsk_1)\hat{f}(\bsk_2)\hat{f}(\bsk_{12})|\\
&    \leq A\times T(\ell_{1},\ell_{2},\ell_{12})\\
    &\leq A\max(1,\ell_1)(\ell_1+2)2^{-2\ell_1-2\ell_2+1}
\end{align*}
using~\eqref{eqn:Slll}, where we have replaced $\ell_1$ by $\max(1,\ell_1)$ in order to include the case with $\ell_1=0$ that was not included in~\eqref{eqn:Slll}.
The constraint $\norm{\bsell_i}>m-t,i=1,2,12$ becomes $\ell_2>\ell_1>m$, so equation~\eqref{eqn:3rdmoment} becomes
\begin{align*}
    \e[(\hat{\mu}_{\infty}-\mu)^3]&\leq \overline{P}\sum_{\ell_1>m}\sum_{\ell_2>\ell_1} A\max(1,\ell_1)(\ell_1+2)2^{-2\ell_1-2\ell_2+1},
\end{align*}
which is easily seen to be $O(n^{-6+\epsilon})$ for any $\epsilon>0$,
because $\overline{P}=2^{-2(m-t-s)}$.
Then for $\e((\hat\mu-\mu)^2)=\Omega(n^{-3})$ we get $\gamma = O(n^{-3/2+\epsilon})$.

When $s=2$, we let $L_j=\min_{i\in\{1,2,12\}}\lceil\kappa_{ij}\rceil$ and $H_j=\max_{i\in\{1,2,12\}}\lceil\kappa_{ij}\rceil$. 
Equations~\eqref{eqn:fff} and \eqref{eqn:Slll} give
$$    \sum_{(\bsk_1,\bsk_2)\in \ck(\bsell_1,\bsell_2,\bsell_{12})}|\hat{f}(\bsk_1)\hat{f}(\bsk_2)\hat{f}(\bsk_{12})|\leq A\prod_{j=1}^2\max(1,L_j)(L_j+2)\prod_{j=1}^22^{-2L_j-2H_j+1}.$$

We recall that $T(\ell_{1},\ell_{2},\ell_{12})$ is symmetric in its arguments and we used the order $\ell_1<\ell_2=\ell_{12}$ in Lemma~\ref{lem:boundT}. 
For 
two dimensions, we reconsider our order. We know that among $i=1,2,12$, either one or two of the three satisfies both $\lceil\kappa_{i1}\rceil=H_1$ and $\lceil\kappa_{i2}\rceil=H_2$. If two of them satisfy the above conditions, we can assume, after reordering, that $\lceil\kappa_{1,1}\rceil=L_1$, $\lceil\kappa_{2,1}\rceil=\lceil\kappa_{12,1}\rceil=H_1$ and  $\lceil\kappa_{1,2}\rceil=L_2, \lceil\kappa_{2,2}\rceil=\lceil\kappa_{12,2}\rceil=H_2$. The constraint $\norm{\bsell_i}>m-t,i=1,2,12$ implies that $H_1+H_2>L_1+L_2>m-t$, so 
$$\prod_{j=1}^22^{-2L_j-2H_j+1}=2^{-2(L_1+L_2)-2(H_1+H_2)+2}\leq 2^{-4(m-t)+2}.$$

Now suppose that only one of the three satisfies the those conditions. Then 
we can assume, after reordering, that $\lceil\kappa_{1,1}\rceil=L_1, \lceil\kappa_{2,1}\rceil=\lceil\kappa_{12,1}\rceil=H_1$, $\lceil\kappa_{2,2}\rceil=L_2$ and $\lceil\kappa_{1,2}\rceil=\lceil\kappa_{12,2}\rceil=H_2$. The constraint $\norm{\bsell_i}>m-t,i=1,2,12$ implies that $L_1+H_2>m-t$ and $L_2+H_1>m-t$, so
$$\prod_{j=1}^22^{-2L_j-2H_j+1}=2^{-2(L_1+H_2)-2(L_2+H_1)+2}\leq 2^{-4(m-t)+2}.$$

In both cases $\sum_{(\bsk_1,\bsk_2)\in \ck(\bsell_1,\bsell_2,\bsell_{12})}|\hat{f}(\bsk_1)\hat{f}(\bsk_2)\hat{f}(\bsk_{12})|$ is $O(n^{-4+\epsilon})$ when $\norm{\bsell_i}>m-t,i=1,2,12$ and a similar calculation as in equation~\eqref{eqn:cubicbound} and \eqref{eqn:suml} shows $\e[(\hat{\mu}_{\infty}-\mu)^3]=O(\overline{P} \,
n^{-4+\epsilon})$, which is $O(n^{-6+\epsilon})$ in view of equation~\eqref{eqn:Pr}. Again, with $\e((\hat\mu-\mu)^2)=\Omega(n^{-3})$, these translate into $\gamma = O(n^{-3/2+\epsilon})$.
\end{remark}

\section{Examples}\label{sec:example}

Because it is numerically difficult to estimate the skewness, we instead compute the skewness of some simple functions to verify our theory. 
First, we consider the one-dimensional function $f(x)= x^2$. It integrates to $1/3$ over $[0,1]$. Denoting $\mu=1/3$, equation~\eqref{eq:numer} becomes
\begin{align*}
    \e((\hat{\mu}-\mu)^3)=\sum_{k_1\in \natu_*}\sum_{\substack{k_2\in \natu_*\\k_2\neq k_1}}\hat{f}(k_1)\hat{f}(k_2)\hat{f}(k_{12})\e\bigl(Z(k_1)Z(k_2)\bigr).
\end{align*}
We have explicitly excluded the $k_1=k_2$ case from the summand because $\hat{f}(0)=\mu\neq 0$.

The Walsh coefficients of $f$ are given in Example 14.3 of \cite{dick:pill:2010}: for $k\in \natu_0$
\begin{equation}\label{eqn:Walshxsquare}
    \hat{f}(k)=\begin{cases} 1/3 & \text{ if } k=0,\\
    - 2^{-\ell-1} & \text{ if } k=2^{\ell-1},\quad \ell\geq 1,\\
     2^{-\ell_1-\ell_2-1} & \text{ if } k=2^{\ell_1-1}+2^{\ell_2-1},\quad \ell_1>\ell_2\geq 1, \\
    0 & \text{ otherwise.}    
    \end{cases}
\end{equation}
In our notation, this implies that $\hat{f}(k)=0$ if $|\kappa|>2$. In order for $\hat{f}(k_1)\hat{f}(k_2)\hat{f}(k_{12})\neq 0$ with $\vec{k}_{12}=\vec{k}_{1}+\vec{k}_{2}\tmod 2$, we must have $|\kappa_1|+|\kappa_2|+|\kappa_{12}|=0 \tmod 2$ and  $|\kappa_1|+|\kappa_2|+|\kappa_{12}| \leq 6$. We can also exclude the $|\kappa_1|+|\kappa_2|+|\kappa_{12}|=0$ and $|\kappa_1|+|\kappa_2|+|\kappa_{12}|=2$ cases because at least one of $\kappa_1$, $\kappa_2$ and $\kappa_{12}$ must be empty, which implies that either $k_1=k_2$ or one of $k_1,k_2$ equals $0$.

When $|\kappa_1|+|\kappa_2|+|\kappa_{12}|=4$, we first consider the case $|\kappa_1|=1$, $|\kappa_2|=1$ and $|\kappa_{12}|=2$. This requires $\kappa_1=\{\ell_1\}$, $\kappa_2=\{\ell_2\}$ and $\ell_1\neq \ell_2$. The corresponding $\kappa_{12}=\{\ell_1,\ell_2\}$. When the generating matrix $C_1$ is the $m\times m$ identity matrix, $\mathcal{Z}(k_1)$ is the event $M_1(\ell_1,:)=\bszero$, which occurs with probability $2^{-m}$ only for $\ell_1>m$. It is also easy to see that $\mathcal{Z}(k_1)$ and $\mathcal{Z}(k_2)$ are independent. Therefore, the sum of $\hat{f}(k_1)\hat{f}(k_2)\hat{f}(k_{12})\e\bigl(Z(k_1)Z(k_2)\bigr)$ over all cases with $|\kappa_1|=|\kappa_2|=1$ and $|\kappa_{12}|=2$ is given by
\begin{align*}
    &2^{-2m} \sum_{\ell_1=m+1}^\infty \sum_{\substack{\ell_2=m+1\\ \ell_2\neq \ell_1}}^\infty (-2^{-\ell_1-1})(-2^{-\ell_2-1}) 2^{-\ell_1-\ell_2-1} \\
    =& 2^{-2m-3} \Big(2\sum_{\ell_1=m+1}^\infty \sum_{\ell_2=\ell_1+1}^\infty  4^{-\ell_1-\ell_2} \Big)\\
    =& \frac{1}{45}2^{-6m-2}.
\end{align*}
The cases with $|\kappa_1|=|\kappa_{12}|=1$ and $|\kappa_2|=2$ can be handled similarly as can the cases with $|\kappa_2|=|\kappa_{12}|=1$ and $|\kappa_1|=2$.

When $|\kappa_1|+|\kappa_2|+|\kappa_{12}|=6$, we must have $\kappa_1=\{\ell_1,\ell_2\}$, $\kappa_2=\{\ell_1,\ell_3\}$ and $\kappa_{12}=\{\ell_2,\ell_3\}$ for distinct $\ell_1,\ell_2,\ell_3$. We assume for now $\ell_1<\ell_2<\ell_3$.  Again when the generating matrix $C_1$ is the identity matrix, then $\mathcal{Z}(k_1)$ is the event $M_1(\ell_1,:)+M_1(\ell_2,:)=\bszero \tmod 2$, which occurs with probability $2^{-m}$ only when $\ell_2>m$. It follows both $\ell_2$ and $\ell_3$ need to be greater than $m$ in order for $\mathcal{Z}(k_1)\cap \mathcal{Z}(k_2)$ to occur with a positive probability. Moreover, $\mathcal{Z}(k_1)\cap \mathcal{Z}(k_2)$ can be written as the event ${M_1(\ell_1,:)}={M_1(\ell_2,:)}={M_1(\ell_3,:)}$, which happens with $2^{-2m}$ probability when $\ell_3>\ell_2>m$. Therefore, the sum of $\hat{f}(k_1)\hat{f}(k_2)\hat{f}(k_{12})\e\bigl(Z(k_1)Z(k_2)\bigr)$ over all $\ell_1<\ell_2<\ell_3$ cases is given by
\begin{align*}
    &2^{-2m} \sum_{\ell_1=1}^\infty \sum_{\substack{\ell_2=m+1\\ \ell_2>\ell_1}}^\infty \sum_{\substack{\ell_3=m+1\\ \ell_3>\ell_2}}^\infty  2^{-\ell_1-\ell_2-1} 2^{-\ell_1-\ell_3-1} 2^{-\ell_2-\ell_3-1} \\
    =& 2^{-2m-3} \Big(\sum_{\ell_1=1}^{m} \sum_{\ell_2=m+1}^\infty \sum_{\ell_3=\ell_2+1}^\infty  4^{-\ell_1-\ell_2-\ell_3} + \sum_{\ell_1=m+1}^{\infty} \sum_{\ell_2=\ell_1+1}^\infty \sum_{\ell_3=\ell_2+1}^\infty  4^{-\ell_1-\ell_2-\ell_3}  \Big)\\
    =& 2^{-2m-3} \Big(\frac{1}{135} 2^{-4m} (1-2^{-2m})+\frac{1}{2835}2^{-6m}\Big)\\
    =&  \frac{1}{135} 2^{-6m-3} -\frac{20}{2835} 2^{-8m-3}.
\end{align*}
The cases corresponding to the other $5$ orderings of $\ell_1,\ell_2,\ell_3$ can be handled similarly.

Summing all cases together shows that
\begin{align*}
    \e( (\hat{\mu}-\mu)^3)=&\frac{3}{45}2^{-6m-2}+\frac{6}{135} 2^{-6m-3} -\frac{120}{2835} 2^{-8m-3}\\
    =& \Big(\frac{1}{45}-\frac{1}{189} 2^{-2m}\Big)2^{-6m}.
\end{align*}
A similar calculation shows that the variance of $\hat{\mu}$ can be lower bounded by
\begin{align}\label{eqn:varlowerbound}
    \e( (\hat{\mu}-\mu)^2) =&\sum_{k\in\natu_*}  \hat{f}(k)^2 \e\bigl(Z(k)\bigr) \nonumber\\
    \geq &\sum_{\ell=m+1}^\infty \hat{f}(2^{\ell-1})^2 \e\bigl(Z(2^{\ell-1})\bigr) \nonumber\\
    =& 2^{-m}\sum_{\ell=m+1}^\infty 4^{-\ell-1} \nonumber\\
    = &\frac{1}{12} 2^{-3m}
\end{align}
This shows that the skewness $\gamma$ is $O(2^{-(3/2)m})=O(n^{-3/2})$.

Next, we consider a three-dimensional example $g(\bsx)=\prod_{j=1}^3 x_j^2$. For $\bsk= (k_1,k_2,k_3)\in \natu^3_0$, its Walsh coefficients are given by $\hat{g}(\bsk)=\hat{f}(k_1)\hat{f}(k_2)\hat{f}(k_3)$ for $\hat{f}(k)$ defined by equation~\eqref{eqn:Walshxsquare}. Denoting $\mu=1/27$, equation~\eqref{eq:numer} becomes
\begin{align*}
    \e((\hat{\mu}-\mu)^3)=\sum_{\bsk_1\in \natu^3_*}\sum_{\substack{\bsk_2\in \natu^3_*\\\bsk_2\neq \bsk_1}}\hat{g}(\bsk_1)\hat{g}(\bsk_2)\hat{g}(\bsk_{12})\e\bigl(Z(\bsk_1)Z(\bsk_2)\bigr).
\end{align*}
Again we explicitly exclude the $\bsk_1=\bsk_2$ case because $\hat{g}(0)=\mu\neq 0$.

Proceeding as before, $\hat{g}(\bsk_1)\hat{g}(\bsk_2)\hat{g}(\bsk_{12})\neq 0$ with $\vec{k}_{12,j}=\vec{k}_{1,j}+\vec{k}_{2,j}\tmod 2$ for $j\in 1{:}3$ implies that $|\kappa_{1j}|+|\kappa_{2j}|+|\kappa_{12j}|=0 \tmod 2$ and  $|\kappa_{1j}|+|\kappa_{2j}|+|\kappa_{12j}| \leq 6$ for $j\in 1{:}3$. However, here we cannot remove the $|\kappa_{1j}|+|\kappa_{2j}|+|\kappa_{12j}|=0$ case or the $|\kappa_{1j}|+|\kappa_{2j}|+|\kappa_{12j}|=2$ case from consideration because they do not necessarily imply that $\bsk_1=\bsk_2$ or that one of $\bsk_1,\bsk_2$ equals $\bszero$. 

For simplicity, we will only analyze the case where $|\kappa_{1j}|+|\kappa_{2j}|+|\kappa_{12j}|=2$ for all $j\in 1{:}3$. The other cases can be handled in a similar way. Without loss of generality, we assume that $\bsk_1=(2^{\ell_1-1}$, $2^{\ell_2-1},0)$, $\bsk_2=(2^{\ell_1-1},0,2^{\ell_3-1})$ and $\bsk_{12}=(0,2^{\ell_2-1},2^{\ell_3-1})$ for $\ell_1,\ell_2,\ell_3\geq 1$. The corresponding $\hat{g}(\bsk_1)\hat{g}(\bsk_2)\hat{g}(\bsk_{12})$ is equal to
$$\Big(\frac{1}{3}2^{-\ell_1-\ell_2-2}\Big)\Big(\frac{1}{3}2^{-\ell_1-\ell_3-2}\Big)\Big(\frac{1}{3}2^{-\ell_2-\ell_3-2}\Big)=\frac{1}{27}2^{-2\ell_1-2\ell_2-2\ell_3-6}.$$
It remains to calculate $\e\bigl(Z(\bsk_1)Z(\bsk_2)\bigr)$.

For $C_1,C_2,C_3$ generating a $(t,m,3)$-net, $\mathcal{Z}(\bsk_1)$ and $\mathcal{Z}(\bsk_2)$ are the events $M_1(\ell_1,:)C_1+M_2(\ell_2,:)C_2=\bszero \tmod 2$ and  $M_1(\ell_1,:)C_1+M_3(\ell_3,:)C_3=\bszero \tmod 2$, respectively. Furthermore, $\mathcal{Z}(\bsk_1)\cap\mathcal{Z}(\bsk_2)$ can be written as the event $M_1(\ell_1,:)C_1=M_2(\ell_2,:)C_2=M_3(\ell_3,:)C_3 \tmod 2$. By the $(t,m,3)$-net property, $\mathcal{Z}(\bsk_1)\cap\mathcal{Z}(\bsk_2)$ occurs with positive probability only if $\ell_1+\ell_2> m-t$, $\ell_1+\ell_3> m-t$ and $\ell_2+\ell_3>m-t$. By Theorem~\ref{cor:Z1Z2bound}, we also know that $\Pr(\mathcal{Z}(\bsk_1)\cap\mathcal{Z}(\bsk_2))\leq 2^{-2(m-t-3)}$ if $\max(\ell_1,\ell_2,\ell_3)> m-t$, so we restrict our attention to cases with $\max(\ell_1,\ell_2,\ell_3)\leq m-t$. 

Without further assumptions on the generating matrices, we cannot bound $\Pr(\mathcal{Z}(\bsk_1)\cap\mathcal{Z}(\bsk_2))$ by $O(2^{-2m})$. Therefore the result that $\gamma = O(n^{-1/2+\epsilon})$ for most realizations of random generating matrices does not hold for all sequences of nonrandom generating matrices with some fixed $t$. We illustrate this by the following  example. Let $m$ be an even number and $r=m/2$. Suppose that $C^*_1,C^*_2,C^*_3$ are the generating matrices for a $(1,m-1,3)$-net (for instance Sobol' nets \cite{sobol67}). For each $j\in 1{:}3$, $C^*_j$ admits a block matrix representation
$$C^*_j=\begin{pmatrix}
    C^*_{j11} & C^*_{j12}\\
    C^*_{j21} & C^*_{j22}  
\end{pmatrix}$$
with $C^*_{j11}\in\{0,1\}^{(r-1)\times(r-1)}$ and $C^*_{j22}\in\{0,1\}^{r\times r}$. Then we can construct the following $m$ by $m$ matrix for $j\in 1{:}3$
$$C_j=\begin{pmatrix}
    C^*_{j11} & \bszero^T & C^*_{j12}\\
    \bszero & 1 & \bszero \\
    C^*_{j21} & \bszero^T & C^*_{j22}  
\end{pmatrix},$$
where $\bszero$'s are zero row vectors of suitable length to make the block matrix compatible. It is straightforward to check that $C_1,C_2,C_3$ generates a $(2,m,3)$-net. Moreover, for $\ell_1=\ell_2=\ell_3=r$, $M_1(\ell_1,:)C_1=M_2(\ell_2,:)C_2=M_3(\ell_3,:\nlb)C_3 \tmod 2$ if $M_j(r,1{:}(r-1))=\bszero$ for all $j\in 1{:}3$. Hence $\mathcal{Z}(\bsk_1)\cap\mathcal{Z}(\bsk_2)$ occurs with probability at least $2^{-3r+3}=2^{-(3/2)m+3}$ and the corresponding $\hat{g}(\bsk_1)\hat{g}(\bsk_2)\hat{g}(\bsk_{12})\e\bigl(Z(\bsk_1)Z(\bsk_2)\bigr)$ is at least $2^{-3r+3} (2^{-6r-6}/27)=2^{-(9/2)m-3}/27$. One cannot expect the skewness to converge to $0$ given such generating matrices, so we will add a condition.

To state our additional assumption, we define, for $C\in\{0,1\}^{m\times m}$ and $1\nlb\leq\nlb\ell\nlb\leq\nlb{m}$, $\row(C,\ell)$ to be the row space spanned by the first $\ell$ rows of $C$ over $\ints_2$. For generating matrices $C_1,C_2,C_3 \in\{0,1\}^{m\times m}$, we define
\begin{equation}\label{eqn:Tdef}
    T=\min\Bigl\{T'\geq 0 \bigm| \bigcap_{j=1}^3 \row(C_j,\ell_j) = \{\bszero\} \text{ whenever } \sum_{j=1}^3 \ell_j = 2m -T'\Bigr\}.
\end{equation}
Here again we assume that $1\leq \ell_j\leq m$ for all $j\in 1{:}s$. Our constructed example shows that $T$ can be unbounded even when $t\leq 2$.

Below we give a brief proof that $\Pr(\mathcal{Z}(\bsk_1)\cap\mathcal{Z}(\bsk_2))\leq 2^{-2m+T+3}$ when $\max(\ell_1,\ell_2,\ell_3)\leq m-t$. First, the $(t,m,3)$-net property guarantees that $C_j(1{:}\ell_1,:\nlb)$ has full rank for each $j\in 1{:}3$. This implies that every $w\in \bigcap_{j=1}^3 \row(C_j,\ell_j)$ has a unique representation as a linear sum of the first $\ell_j$ row vectors of $C_j$ for each $j\in 1{:}3$, which allows us to write 
$$w=v_1 C_1( 1{:}\ell_1,:)=v_2C_2( 1{:}\ell_2,:)=v_3C_3( 1{:}\ell_3,:)\tmod 2$$
with $v_j \in \{0,1\}^{\ell_j}$ for $j\in 1{:}3$. When $w=\bszero$, $v_1,v_2,v_3$ are all zero vectors, so it cannot occur that
$$\bszero=M_1(\ell_1,:)C_1=M_2(\ell_2,:)C_2=M_3(\ell_3,:)C_3 \tmod 2$$
given $M_j(\ell_j,:)$ is nonzero for $\ell_j\leq m$, $j\in 1{:}3$. For each nonzero $w\in \bigcap_{j=1}^3 \row(C_j,\ell_j)$, the event
$$w=M_1(\ell_1,:)C_1=M_2(\ell_2,:)C_2=M_3(\ell_3,:)C_3 \tmod 2$$
occurs with probability at most $2^{-\ell_1-\ell_2-\ell_3+3}$ since there are $\ell_j-1$ $\dunif\{0,1\}$ random entries in $M_j(\ell_j,:)$ for each $j\in 1{:}3$. A union bound over all nonzero $w\in \bigcap_{j=1}^3 \row(C_j,\ell_j)$ proves that
\begin{equation}\label{eqn:rowinsectbound}
  \Pr(\mathcal{Z}(\bsk_1)\cap\mathcal{Z}(\bsk_2))\leq \bigg(\Big|\bigcap_{j=1}^3 \row(C_j,\ell_j)\Big|-1\bigg) 2^{-\ell_1-\ell_2-\ell_3+3}.  
\end{equation}
Next, we notice that $\bigcap_{j=1}^3 \row(C_j,\ell_j)$ is also a vector space over $\ints_2$, so it has a rank $r(\ell_1,\ell_2,\ell_3)$ that depends on $\ell_1,\ell_2,\ell_3$ and its cardinality is $2^{r(\ell_1,\ell_2,\ell_3)}$. By the definition of $T$, $r(\ell_1,\ell_2,\ell_3)=0$ if $\ell_1+\ell_2+\ell_3\leq 2m-T$, in which case $\Pr(\mathcal{Z}(\bsk_1)\cap\mathcal{Z}(\bsk_2))=0$. When $\ell_1+\ell_2+\ell_3> 2m-T$, we can bound equation~\eqref{eqn:rowinsectbound} by
\begin{equation*}
  \Pr(\mathcal{Z}(\bsk_1)\cap\mathcal{Z}(\bsk_2))\leq 2^{r(\ell_1,\ell_2,\ell_3)-\ell_1-\ell_2-\ell_3+3}.  
\end{equation*}
It remains to show that $\ell_1+\ell_2+\ell_3-r(\ell_1,\ell_2,\ell_3)\geq 2m-T$ holds when $\ell_1+\ell_2+\ell_3> 2m-T$. Let us first compare $r(\ell_1,\ell_2,\ell_3)$ with $r(\ell_1+1,\ell_2,\ell_3)$. Notice that $\row(C_1,\ell_1+1)$ is the union of $\row(C_1,\ell_1)$ and $\row^{+}(C_1,\ell_1)$, the latter denoting the affine space of $\row(C_1,\ell_1)$ shifted by $C(\ell_1+1,:)$. Let $R_{23}=\row(C_2,\ell_2)\bigcap \row(C_3,\ell_3)$. We have
$$\Big|\row(C_1,\ell_1+1)\bigcap R_{23}\Big|\leq \Big|\row(C_1,\ell_1)\bigcap R_{23}\Big|+\Big|\row^{+}(C_1,\ell_1)\bigcap R_{23}\Big|.$$
If $\row^{+}(C_1,\ell_1)\bigcap R_{23}$ is empty, then $r(\ell_1+1,\ell_2,\ell_3)=r(\ell_1,\ell_2,\ell_3)$. Otherwise, we can find a vector $w^*\in \row^{+}(C_1,\ell_1)\bigcap R_{23}$. Given such $w^*$, we can construct a one-to-one mapping from $v\in \row(C_1,\ell_1)\bigcap R_{23}$ to $w\in \row^+(C_1,\ell_1)\bigcap R_{23}$ by $w=v+w^*$. Hence $\big|\row^{+}(C_1,\ell_1)\bigcap R_{23}\big|=\big|\row(C_1,\ell_1)\bigcap R_{23}\big|$ and we can conclude that $r(\ell_1+1,\ell_2,\ell_3)=r(\ell_1,\ell_2,\ell_3)+1$. In both cases, we have 
$$\ell_1+1+\ell_2+\ell_3-r(\ell_1+1,\ell_2,\ell_3)\geq \ell_1+\ell_2+\ell_3-r(\ell_1,\ell_2,\ell_3).$$
A similar reasoning applies to $r(\ell_1,\ell_2+1,\ell_3)$ and $r(\ell_1,\ell_2,\ell_3+1)$. Our conclusion then follows from induction.

In summary, we have shown that
$$\e\bigl(Z(\bsk_1)Z(\bsk_2)\bigr)\leq \overline{P}(\ell_1,\ell_2,\ell_3) =\begin{cases}
0 & \text{if } \min(\ell_1+\ell_2,\ell_1+\ell_3,\ell_2+\ell_3)\leq m-t,\\
    2^{-2(m-t-3)} & \text{if } \max(\ell_1,\ell_2,\ell_3)> m-t,\\
    2^{-2m+T+3} & \text{otherwise}.
\end{cases}$$
The sum of $\hat{g}(\bsk_1)\hat{g}(\bsk_2)\hat{g}(\bsk_{12})\e\bigl(Z(\bsk_1)Z(\bsk_2)\bigr)$ over all $|\kappa_{1j}|+|\kappa_{2j}|+|\kappa_{12j}|=2, j\in 1{:}3$ cases is bounded by
\begin{align*}
    &\phantom{\leq\,}\sum_{\ell_1=1}^\infty\sum_{\ell_2=1}^\infty\sum_{\ell_3=1}^\infty\frac{1}{27}2^{-2\ell_1-2\ell_2-2\ell_3-6} \overline{P}(\ell_1,\ell_2,\ell_3)\\
   & \leq  \frac{1}{27}\max(2^{2t},2^{T-3})2^{-2m} \bigg(\sum_{\ell=m-t+1}^\infty 2^{-\ell}\bigg)^3\\
   & =  \frac{1}{27}\max(2^{5t},2^{3t+T-3})2^{-5m},
\end{align*}
where the first inequality is equation~\eqref{eqn:cubicbound} specialized to this example. Similar bounds can be established for the sum over other cases. A calculation similar to equation~\eqref{eqn:varlowerbound} also shows that the variance of $\hat{\mu}$ is lower bounded by $c2^{-3m}$ for some $c>0$, and hence the skewness $\gamma$ is $O(\max(2^{5t},2^{3t+T})2^{-5m})$.

As we have mentioned, there exist $(1,m,3)$-digital nets such as Sobol' nets, so $t$ can be made small. The $T$ defined by equation~\eqref{eqn:Tdef} is, however, another characteristic of our generating matrices and not controlled by $t$. In this context, one can view Theorem~\ref{thm:Z1Z2bound} as showing that $T/m$ converges to $0$ in probability when $C_1,C_2,C_3$ are independently drawn from a uniform distribution on $\{0,1\}^{m\times m}$. A full exploration is beyond the scope of this paper and we leave it for future research.

\section{Discussion}\label{sec:disc}

We have given conditions under which random linear
scrambling of base $2$ digital nets leads
to small values of $\e((\hat\mu-\mu)^3)$
as $n=2^m\to\infty$. This numerator in 
the skewness $\gamma$ is generally 
$O(n^{-9/2+\epsilon})$ and is even
$O(n^{-5+\epsilon})$ under a condition on 
randomly chosen generator
matrices that holds with probability approaching one
as $n\to\infty$.  Our conditions on $f$ ensure
that $\e((\hat\mu-\mu)^2)=O(n^{-3+\epsilon})$. For an integrand $f$ with $\e((\hat\mu-\mu)^2)=\Omega(n^{-3})$ we get
$\gamma = O(n^\epsilon)$ for any $\epsilon>0$, which is
`almost $O(1)$'. Under the condition on randomly chosen
generator matrices, we get $\gamma = O(n^{-1/2+\epsilon})$,
although that rate includes a bound with possibly very slow convergence of $m^{3s}2^{-2\epsilon m}$ to zero when a small value of $\epsilon$ is chosen.

Our results are for smooth $f$, while the empirical
findings in \cite{ci4rqmc} included some non-smooth
integrands.
We used smoothness of $f$ in order to 
bound $\e((\hat\mu-\mu)^3)$, the numerator
of $\gamma$.  Specifically, $f$ had
to be continuously differentiable up to two times
in each variable. Continuous differentiability up to one
time in each variable puts those derivatives in $L^2[0,1]^s$ and hence makes the denominator $\e( (\hat\mu-\mu)^2)^{3/2}=O(n^{-9/2+\epsilon})$.  For discontinuous
or otherwise non-smooth $f$, the denominator will generally
be larger which then reduces $\gamma$.
If $f$ is once continuously differentiable in every
variable, but not twice continuously differentiable in
every variable, then the denominator of $\gamma$ is just
as small as before but the numerator may no longer 
satisfy our bounds. It is possible that integrands
of this intermediate smoothness level would have large
skewness.

The reliability of Student's $t$ based confidence
intervals 
in \cite{ci4rqmc} was not just for
this one kind of scrambled nets. It was evident for
nets scrambled as in \cite{rtms} as well as for some
randomized lattice rules. The skewness for those methods
raises different technical issues than the linearly
scrambled nets we considered do.

While \cite{ci4rqmc} show good empirical performance
when simply using the standard Student's $t$ method
to get asymptotic confidence
intervals for $\mu$ on based on replicates $\hat\mu_1,\dots,\hat\mu_R$,
much work remains in order to bound the coverage error
of those intervals.
Such an error analysis needs to consider both the central
limit theorem as $R\to\infty$ and the distribution of $\hat\mu$
as $n\to\infty$.  Classical Berry-Esseen bounds require
$\e( |\hat\mu-\mu|^3)/\e(|\hat\mu-\mu|^2)^{3/2}$ to be finite
but our theory does not have an absolute value in the
numerator.  The theory of Hall \cite{hall:1986,hall:1988} uses 
central moments of order at least six and we know that 
$\e( (\hat\mu-\mu)^4)/\e( (\hat\mu-\mu)^2)^2\to\infty$ is typical
for random linear scrambling as $n\to\infty$. So both Hall's theory and Berry-Esseen apply to a limit as $R\to\infty$ for fixed $n$.  This limits how well they can explain the results in \cite{ci4rqmc} which had large $n$.

The skewness of $\hat\mu$ is a centered and scaled
third moment. Therefore its sampling variance depends on the sixth moment of $\hat\mu$ and even an appropriately scaled fourth moment diverges to infinity as $n\to\infty$.  As a result, we believe that distinguishing empirically between $\gamma=O(n^\epsilon)$ and
$\gamma=O(n^{-1/2+\epsilon})$ would be extremely expensive.


In empirical work we have seen that the distribution
of $\hat\mu$ tends to become symmetric as $n$ increases.
We believe but have not proved 
that the distribution of $\hat\mu$ is asymptotically
symmetric as $n\to\infty$.  We know from \cite{superpolymulti} that for smooth integrands, the distribution of $\hat\mu$ has a small probability
of severe outliers which makes a median-of-means strategy very effective.
For confidence intervals, a data set $\hat\mu_1,\dots,\hat\mu_R$ with
no outliers would be expected to have reasonable coverage.  When there
are one or more outliers, the mean is adversely affected but the standard
deviation is also greatly enlarged making the Student's $t$ interval
wide enough that it can still cover the true mean $\mu$.  For instance,
suppose there are $R-1$ replicates $\hat\mu_r\approx0=\mu$ and one outlying replicate
at $c\ne0$. Without loss of generality we can take $c=1$. Then we get $\hat\mu \approx 1/{R}$ and the sample standard error is approximately
$$
\biggl[\frac1{R(R-1)}\Bigl[ (R-1)\Bigl(0-\frac1R\Bigr)^2+\Bigl(1-\frac1R\Bigr)^2\Bigr]\biggr]^{1/2}=\frac1R\sqrt{\frac{R}{R-1}}.
$$
Then the true mean is less than one standard error from the sample mean
and will be well within the Student's  $t$ confidence intervals for $95$\%
coverage which always extend at least $1.96$ standard errors in each direction from the sample mean. An explanation of this type does not require a symmetric distribution for $\hat\mu_r-\mu$. It is enough for that distribution to be a mixture of a benign distribution with probability $1-\epsilon$ and a symmetric outlier distribution with probability $\epsilon$. The benign component need not be symmetric.

\section*{Acknowledgments}
This work was supported by the U.S.\ National Science Foundation
under grant under grant DMS-2152780 and by a Stanford Graduate Fellowship in Science \& Engineering.

\bibliographystyle{plain}
\bibliography{qmc}
\end{document}